\newlength{\Oldarrayrulewidth}
\begin{document}

\title{Numerical Simulations of Surface-Quasi Geostrophic Flows on Periodic Domains
\thanks{\funding{A.B. is partially supported by the NSF Grant DMS-1817691; M.N. is partially supported by Esseen scholarship at Uppsala University.}}}

\author{Andrea Bonito\thanks{Department of Mathematics, Texas A\&M University, College Station, TX 77843.}
\and Murtazo Nazarov \thanks{Corresponding author. Department of Information Technology, Uppsala University, SE 75105.}}

\maketitle

\begin{abstract}
We propose a novel algorithm for the approximation of surface-quasi geostrophic (SQG) flows modeled by a nonlinear partial differential equation coupling transport and fractional diffusion phenomena. The time discretization consists of an explicit strong-stability-preserving three-stage Runge-Kutta method while a flux-corrected-transport (FCT) method coupled with Dunford-Taylor representations of fractional operators is advocated for the space discretization. Standard continuous piecewise linear finite elements are employed and the algorithm does not have restrictions on the mesh structure nor on the computational domain. In the inviscid case, we show that the resulting scheme satisfies a discrete maximum principle property under a standard CFL condition and observe, in practice, its second-order accuracy in space. The algorithm successfully approximates several benchmarks with sharp transitions and fine structures typical of SQG flows. In addition, theoretical Kolmogorov energy decay rates are observed on a freely decaying atmospheric turbulence simulation.
\end{abstract}

\begin{keywords} 
  Geostrophic flows; Finite element method; Dunford-Taylor integral; Fractional Diffusion; Discrete maximum principle; Nonlinear viscosity; FCT algorithm.
\end{keywords}

\begin{AMS}
  65M60, 65M12, 35L65, 	76U05, 35R11
\end{AMS}

\pagestyle{myheadings} \thispagestyle{plain} \markboth{ANDREA BONITO, MURTAZO NAZAROV}{Numerical simulations of SQG flows}

\section{Introduction}
\label{sec:intro}

The Navier-Stokes system models the behavior of  incompressible, adiabatic, inviscid fluids in hydrostatic balance. 
When, in addition, the fluid is constrained by environmental rotation and stratification, Charney \cite{charney1990scale} derived in the 1940's a three dimensional quasi-geostrophic model to describe large-scale mid-latitude atmospheric motions and oceanographic motions. \! Charney's quasi-geostrophic model received much attention, we mention \cite{gill2016atmosphere,majda2003introduction,pedlosky2013geophysical,lacasce2006estimating,lapeyre2006dynamics,vallis2017atmospheric} for discussions on its validity.

In a surface quasi-geostrophic (SQG) setting, it is further assumed that the potential vorticity is uniform, see for instance \cite{pedlosky2013geophysical,charney1971geostrophic,lapeyre2017surface,constantin1994SQG,constantin1998SQG,held1995surface}. 
Consequently, on the half plane above the surface $\mathcal S:=\{ (x_1,x_2,x_3) \in \mathbb R^3 \ : \ x_3=0\}$, the vorticity $\widetilde \psi(x_1,x_2,x_3,t)$ satisfies
\begin{equation}\label{e:laplacianwhole}
\Delta \widetilde \psi = 0, \quad \textrm{where } x_3>0 \quad \textrm{and} \quad \lim_{z\to \infty} \widetilde\psi(x_1,x_2,x_3,t) = 0.
\end{equation}
On $\mathcal S$,  the buoyancy (or potential temperature) is given by $\theta := \p_{x_3} \widetilde \psi|_{x_3=0}$.

We restrict our considerations to surface consisting in a rectangular periodic domain $\Omega:=(0,\pi)^2$  denoted $\mathbb T^2$ in short. When restricted to $\mathbb T^2$, \eqref{e:laplacianwhole} corresponds to a nonlocal elliptic partial differential equation involving $\theta(x_1,x_2,t)$ and $\psi(x_1,x_2,t):=\widetilde \psi(x_1,x_2,0,t)$, namely
$$
(-\Delta)^{\frac 1 2}\psi = \theta,
$$
where $(-\Delta)^{\frac 1 2}$ stands for the \emph{spectral} fractional laplacian defined in Section~\ref{ss:FracLap}.
The buoyancy is transported on $\mathbb T^2$ along the orthogonal directions to the vorticity gradient $\mathbf u := \nabla^\perp \psi $, where for $v:\mathbb R^2 \rightarrow \mathbb R$ we set  \mbox{$
\nabla^\perp v := 
\begin{pmatrix}
-\p_{x_2} v\\
~\p_{x_1} v
\end{pmatrix}$.} 
In addition, we account for the Ekman pumping effect (friction between vertical thin layers of atmosphere) resulting in the following nonlinear advection-diffusion relation for the buoyancy 
$$
 \p_t \theta + \bu\SCAL\GRAD  \theta  + \varkappa (-\Delta)^{\frac 1 2} \theta = 0,
$$
where  $\varkappa \geq 0$ stands for the Ekman pumping coefficient. 
This coefficient is typically small except on narrow boundary layers touching the fluid boundary \cite{pedlosky2013geophysical}.  
The case $\varkappa =0$ will be referred to as the inviscid case.  A detailed derivation of the SQG system can be found in \cite{penghesis}, based on the works \cite{pedlosky2013geophysical,charney1971geostrophic,held1995surface,leith1980nonlinear,hoskins1975geostrophic} and \cite{bonito2020electroconvection,stinga2010extension}. 
	
The above nonlinear system of equations features many aspect of large-scale atmospheric motions. 
Among them, we list the apparition in finite time of discontinuous temperature - called Frontogenesis - and the conservation (for $\varkappa=0$) of the kinetic energy and helicity, see Section~\ref{sec:numerical_tests}.
Whether solutions to the SQG equations can develop singularities is a question which concerned many researchers and global regularity for general data remains an open problem \cite{constantin2012SQGsimu}. 
We refer to \cite{caffarelli2010drift,constantin2001critical,constantin1994SQG,resnick1996dynamical,kiselev2007global,buckmaster2019nonuniqueness} for additional information.

Numerical methods are of fundamental importance to assess the behavior of the solutions to the SQG system.
Particular attention must be made to reproduce accurately discontinuous profiles while conserving the kinetic energy and helicity.
Existing numerical algorithms for the approximation of the SQG system are based on spectral decompositions of the solution coupled with higher order exponential filters, see \cite{constantin2012SQGsimu,constantin1994SQG,constantin1998SQG} and \cite{Karniadakis2017SQG}.
Instead, our approach is based on standard finite element discretization with nonlinear stabilization. 

We summaries in Section~\ref{sec:prelim} the SQG system along with some of its important properties. 
In Section~\ref{s:numerical_alg}, we propose to adapt the algorithms proposed in \cite{bonito2015numerical,bonito2017numerical,bonito2019numerical,bonito2019sinc} to the present periodic setting and employ a flux corrected transport (FCT) limiting blending a low order scheme satisfying a discrete maximum principle (when $\varkappa =0$) with a higher order shock-capturing method \cite{Guermond_Nazarov_Popov_Yong_2014, Guermond_Popov_2017,Guermond_et_al_2018}.
The resulting scheme retain the maximum preserving property and is observed in practice to retain the higher order accuracy.
At this point it is worth mentioning that there seem to be no mathematical explanation of the higher order properties of FCT algorithm available in the literature. 
We showcase in Section~\ref{sec:numerical_tests} the need of the FCT algorithm to avoid over-diffusive simulations. 
In fact, the numerical simulations obtained exhibit sharp resolutions of line discontinuities and fine structures.
In addition, we propose numerical simulations of freely decaying turbulence and confirm the predictions of \cite{held1995surface,Tulloch14690,lapeyre2017surface,Ragone_Badin_2016,capet2008surface} for the decay of the kinetic energy cascade for the inviscid $(\varkappa=0)$ and diffuse $(\varkappa>0)$ SQG system. At large scales we recover the $-\frac53$ Kolmogorov rate of decay  typical to three dimensional flows while  a $-3$  Kolmogorov rate of decay, this time typical of two dimensional flows, is observed at small scales.

\section{Preliminaries}\label{sec:prelim}

\subsection{The Spectral Fractional Laplacian on the Torus}\label{ss:FracLap}

To define the fractional laplacian in \eqref{eq:sqg_frac}, we denote by $\{(\lambda_i,\phi_i)\}_{i=0}^\infty \subset \mathbb R_+ \times H^1(\mathbb T^2)$ the eigenpairs of the Laplacian on the torus $\mathbb T^2$.  
We use the convention $0=\lambda_0<\lambda_1\leq \lambda_2 \leq ...$ and assume that the $\phi_i$'s are orthonormal in $L^2(\mathbb T^2)$ and orthogonal in $H^1(\mathbb T^2)$. 

For $-1\leq r \leq 1$, the fractional power of the Laplacian is defined for smooth functions $v \in C^\infty(\mathbb T^2)$ with vanishing mean value as
\begin{equation}\label{e:spectral_lap}
(-\Delta)^r v := \sum_{n=1}^\infty \lambda_i^r v_i \phi_i, \qquad v_i := \int_{\mathbb T^2} v(\bx) \phi_i(\bx)d\bx. 
\end{equation}

The definition of the fractional laplacian \eqref{e:spectral_lap} is extended by density to 
$$
\mathcal D((-\Delta)^s) := \left\lbrace v \in L^2_\#(\mathbb T^2) \ : \ \sum_{i=0}^\infty \left(\int_{\mathbb T^2} v \phi_i \right)^2 \lambda_i^{2s} < \infty \right\rbrace,
$$
where $L^2_\#(\mathbb T^2)$ is the subspace of $L^2(\mathbb T^2)$ consisting of vanishing mean value functions. 

For latter use, we record the following relation directly following from the definition of the fractional laplacian 
\begin{equation}\label{e:int_parts}
\int_{\mathbb T^2} (-\Delta)^{s_1} v (-\Delta)^{s_2} w  = \int_{\mathbb T^2} (-\Delta)^{r_1} v (-\Delta)^{r_2} w, \qquad  v,w \in C^\infty(\mathbb T^2) \cap L^2_\#(\mathbb T^2),
\end{equation}
for $-1 \leq s_1 \leq r_1 \leq r_2 \leq s_2\leq 1$ satisfying $s_1+s_2 = r_1+r_2$.

\subsection{The SQG Equations}

We denote by $T$ the final time.
The solution to the SQG system is a pair $\theta,\psi: \mathbb T^2 \times [0,T] \rightarrow \mathbb R$ satisfying  
\begin{equation}\label{eq:sqg_temp}
		\p_t \theta + \bu\SCAL\GRAD \theta + \varkappa (-\Delta)^{s} \theta = 0, \qquad \textrm{in }\mathbb T^2 \times (0,T]		
	\end{equation}
and
\begin{equation}\label{eq:sqg_frac}
		(-\Delta)^{\frac 1 2}\psi = \theta, \quad \mathbf u = \nabla^\perp \psi \qquad \textrm{in }\mathbb T^2 \times (0,T].
\end{equation}
Here we introduced a parameter $0< s<1$ to include additional mathematical models considered in the literature for the design of the numerical method.
However, our numerical experiments focus on the physical SQG system and thus on the critical case $s=\frac 12$.
The system of equations \eqref{eq:sqg_temp} and \eqref{eq:sqg_frac} is supplemented by the initial and mean value conditions
$$
\theta(.,0) = \theta_0 \quad \textrm{in }\mathbb T^2, \qquad \int_{\mathbb T^2} \theta =  \int_{\mathbb T^2} \psi = 0 \qquad \textrm{in }(0,T),
$$where $\theta_0: \mathbb T^2 \rightarrow \mathbb R$ is a given initial buoyancy satisfying $\int_{\mathbb T^2} \theta_0(\bx)= 0$.

From now on we assume that there exists a unique sufficiently smooth solution $(\theta,\psi)$ and refer to works cited in the introduction for discussions on the existence and uniqueness of solutions as well as their regularity.

\subsection{Kinetic Energy and Helicity}

The kinetic energy $\calK(\theta)$ and helicity $\calH(\theta)$ are defined by
\begin{equation}\label{eq:kinetic_helicity}
	\calK(\theta):= \frac12 \int_{\mathbb T^2} \theta^2(\bx,t) \ud \bx \quad  \mbox{ and } \quad 
	\calH(\theta):= -\!\!\int_{\mathbb T^2} \psi(\bx,t) \theta(\bx,t) \ud \bx
\end{equation}
and are monitored in several numerical experiments in Section~\ref{sec:numerical_tests} to showcase the performances of the proposed algorithm.
We also compute in Section~\ref{sec:2d_turbulence} the Kolmogorov energy cascades for the Kinetic model and validate our turbulence model.
Both are conserved quantities when $\varkappa =0$ and dissipated when $\varkappa>0$.
We make this more precise now.

To obtain an evolution relation for the kinetic energy, we multiply \eqref{eq:sqg_temp} by $\theta$ and integrate over $\mathbb T^2$ to get
\begin{equation}\label{eq:cons_kinetic_A}
\frac{d}{dt} \calK(\theta) = - \varkappa \int_{\mathbb T^2} (-\Delta)^{s} \theta ~\theta,
\end{equation}
where we used the definition $\bu = \nabla^\perp \psi$ to deduce that $\textrm{div}\bu = 0$ and so $
\int_{\mathbb T^2} \bu\SCAL\GRAD \theta~  \theta =0$.
The integration by parts relation~\eqref{e:int_parts} applied to the right hand side of \eqref{eq:cons_kinetic_A} yields
\begin{equation}\label{eq:cons_kinetic}
\frac{d}{dt} \calK(\theta) = - \varkappa \int_{\mathbb T^2} |(-\Delta)^{\frac s 2} \theta|^2.
\end{equation}


We now turn our attention to the helicity.
We multiply \eqref{eq:sqg_temp} by $\psi$, integrate over $\mathbb T^2$ and invoke the relation $\psi = (-\Delta)^{-\frac 1 2} \theta$ to write
\begin{equation}\label{e:hel_mid}
\int_{\mathbb T^2} \partial_t \theta (-\Delta)^{-\frac 1 2}\theta + \int_{\mathbb T^2} \bu\SCAL\GRAD \theta \ \psi = - \varkappa \int_{\mathbb T^2} (-\Delta)^{s} \theta (-\Delta)^{-\frac 1 2} \theta.
\end{equation}
We rewrite the above three terms separately.
The term involving the velocity $\bu = \nabla^\perp \psi$ vanishes in this case as well
$$
\int_{\mathbb T^2} \bu \cdot \nabla \theta \psi = - \int_{\mathbb T^2} \bu \cdot \nabla \psi \theta = - \int_{\mathbb T^2} \nabla^\perp \psi \cdot \nabla \psi \theta = 0.
$$
For the left most term in \eqref{e:hel_mid}, we invoke the integration by parts formula \eqref{e:int_parts} (twice) and the relation $\psi = (-\Delta)^{-\frac 1 2} \theta$ to deduce
\begin{equation*}
\int_{\mathbb T^2} \partial_t \theta (-\Delta)^{-\frac 1 2}\theta = \frac 1 2 \frac{d}{dt}\int_{\mathbb T^2} |(-\Delta)^{-\frac 14} \theta|^2 =
 \frac 1 2 \frac{d}{dt}\int_{\mathbb T^2} \psi \theta =  -\frac 1 2 \frac{d}{dt} \mathcal H(\theta).
\end{equation*}
Using \eqref{e:int_parts} once again for the right hand side of  \eqref{e:hel_mid} yields
$$
- \varkappa \int_{\mathbb T^2} (-\Delta)^{s} \theta (-\Delta)^{-\frac 1 2} \theta = - \varkappa \int_{\mathbb T^2} |(-\Delta)^{\frac 1 2(s-\frac 1 2)} \theta|^2.
$$
Gathering the above relations, we obtain 
\begin{equation}\label{eq:cons_helicity}
\frac 1 2 \frac{d}{dt} \mathcal H(\theta) =   \varkappa \int_{\mathbb T^2} |(-\Delta)^{\frac 1 2(s-\frac 1 2)} \theta|^2.
\end{equation}

\section{Numerical Algorithm}\label{s:numerical_alg}

\subsection{The Finite Element Spaces}\label{sec:notations}

We propose to use continuous piecewise linear finite elements for the space approximation of the potential temperature $\theta$ and stream function $\psi$.
Let $\{ \calT_h \}_{h>0}$ be a sequence of shape-regular, quasi-uniform and conforming triangulations of $\mathbb T^2$ in the sense of \cite{ciarlet2002finite}, where $h:=\min_{K \in \mathcal T_h}\textrm{diam}(K)$ stands for the smallest diameter  of all the triangles in $\mathcal T_h$. 

To each triangulation $\calT_h$, we associate the spaces of continuous piecewise polynomial
\begin{equation}\label{e:FEM_space}
\calX_h:=\{ v_h\in \calC^0(\mathbb T^2);\, \forall K\in  \calT_h,\, v_h|_K \in \polP_1 \},  \quad \calX_{h,0}:= \calX_h \cap L^2_\#(\mathbb T^2),
\end{equation}
where $\polP_1$ denotes the space of polynomials of degree at most one and $\calC^0(\mathbb T^2)$ the space of continuous functions on $\mathbb T^2$ (and therefore $2\pi$-periodic on each variable).
We denote by $\{\varphi_1,\ldots,\varphi_I\}$ the basis of $\calX_h$ made of linear Lagrange finite elements (hat functions) associated with the collection of all the vertices $\{ \bx_j \}_{j=1}^I$ in the triangulation $\calT_h$ (not counting twice the periodic nodes).
The index list of basis functions interacting with  $\varphi_i$, $1\leq i \leq I$, is denoted by
\begin{equation}\label{e:index}
\calI(i):=\{ j \in \{1,...,I\} :  \textrm{supp}(\varphi_i) \cap \textrm{supp}(\varphi_j) \not = \emptyset \}.
\end{equation}
A mass lumping strategy detailed below will be critical to obtain maximum principle preserving schemes.
We denote by 
\begin{equation}\label{e:m}
m_{ij}:= \int_{\mathbb T^2} \varphi_j \varphi_i \quad \textrm{and} \quad m_i:= \sum_{j\in \calI(i)} m_{ij} = \int_{\mathbb T^2} \varphi_i
\end{equation}
the elements of the consistent and lumped mass matrices.

To ease the notations, we will use capital letters to denote finite element approximations and drop the subindex $h$. 
For instance, $\Theta \in \calX_{h,0}$ will denote the approximation of $\theta$.  

\subsection{Approximations of the Fractional Laplacian with Periodic Boundary Conditions}
\label{sec:fractional}

Several approaches are available for the approximation of the spectral fractional Laplacian.
We refer for instance the the reviews \cite{bonito2018numerical} and \cite{lischke2018fractional}. 
In this work, we adapt the algorithms developed in \cite{bonito2015numerical,bonito2017numerical,bonito2019sinc},  which are based on different Balakrshian-Dunford-Taylor representations described now.
We emphasis that the resulting algorithms consist of the agglomerations of  solutions to  advection-diffusion problems approximated using a standard continuous piecewise linear finite element space $\calX_h$. Their implementations are therefore straightforward and readily available in standard finite element softwares. Also, the algorithms presented do not suffer any restriction regarding the shape of the computational domain.

\subsubsection{Approximations of Negative Powers of Fractional Operators}
For $f \in L^2_\#(\mathbb T^2)$ and $s \in (0,1)$, we have the following representation
$$
(-\Delta)^{-s} f =  v:=\frac{1}{\pi} \int_{-\infty}^\infty e^{(1-s)y} w(y) dy,
$$
where $w(y)\in H^1(\mathbb T^2)\cap L^2_\#(\mathbb T^2)$ solves
$$
e^y w(y) -\Delta w(y) = f \qquad \textrm{in }\mathbb T^2,
$$
see e.g. \cite{MR1336382}.

A sinc quadrature is advocated for the approximation of the integral in $y$, thereby requiring the values of $w(y_\ell)$ at some selected snapshots $y_\ell \in \mathbb R$. The latter are approximated using a standard finite element method for reaction-diffusion problems.
Given a spacing parameter $k>0$ and integer $M\sim k^{-2}$,  we have  
\begin{equation}\label{e:appx_inv}
(-\Delta)^{-s} f \approx V_k := \frac 1 \pi k \sum_{\ell=-M}^{M} e^{(1-s)y_{\ell}}W(y^{\ell}),
\end{equation}
where $y_{\ell}:=\ell k$, $\ell=-M,...,M$, and $\calX_h \ni W(y^\ell) \approx w(y_\ell)$ solves
$$
e^{y_{\ell}} \int_{\mathbb T^2} W(y_{\ell}) R  + \int_{\mathbb T^2} \nabla W(y_{\ell}) \cdot \nabla R = \int_{\mathbb T^2} f R, \qquad \forall R \in \calX_h.
$$
Notice that when $\int_{\mathbb T^2} f  = 0$, we automatically have $\int_{\mathbb T^2} W(y_\ell) = 0$ and thus $V_k \in \calX_{h,0}$.
We refer to \cite{bonito2015numerical,bonito2019sinc} for the convergence analysis of $v_h^k$ towards $v$. We only  point out here that the convergence is exponential in $-1/k$ and optimal in $h$ (depending on the regularity of $f$ and the metric used to measure the error).

\subsubsection{Approximations of Positive Powers of Fractional Operators}{\textcolor{white}.}
While \eqref{e:appx_inv} is sufficient to design a numerical scheme approximating \eqref{eq:sqg_frac}, the explicit nature of our proposed time stepping scheme (see Section~\ref{s:numerical_alg}) also requires, when $\varkappa>0$, an approximation of 
$$
\int_{\mathbb T^2} (-\Delta)^s V\ W,
$$
for $\frac 1 2 \leq s<1$ and $V, W \in \calX_h$.
This time, we use the representation derived in \cite{bonito2019numerical} 
\begin{equation}\label{e:rep_op}
\int_{{\mathbb T^2}} (-\Delta)^s V \ W = 2\frac{\sin( \pi s)}{\pi}  \int_0^\infty e^{sy} \int_{{\mathbb T^2}} (V+\tilde V(y;V)) W  dy,
\end{equation}
which is valid for $0\leq s \leq 1$, $V,W \in \calX_{h}$ and where the function $\tilde V := \tilde V(y;V)\in \calX_{h}$ are given by the relation
\begin{equation}\label{eq:lapl_sinc}
\int_{\mathbb T^2} \tilde V  R + e^{-y} \int_{\mathbb T^2} \nabla \tilde V \cdot \nabla  R = - \int_{\mathbb T^2} V  R, \quad \forall R \in \calX_h.
\end{equation}
As in the previous case, the integration in $y$ is approximated by a sinc quadrature: given $k>0$ and $M \sim 1/k^2$, we define 
\begin{equation}\label{e:apprx_action}
A_{h,k}(V,W):= 2\frac{\sin( \pi s)}{\pi} k \sum_{\ell = -M}^M  e^{sy_\ell}\int_{\mathbb T^2} (V+\tilde V(y_\ell;V))W \approx \int_{{\mathbb T^2}} (-\Delta)^s V \ W.
\end{equation}

An analysis of this approximation strategy in the more complex case of the integral fractional Laplacian is available in \cite{bonito2019numerical}. 
We do not expand on this further but note for later use that because $\int_{\mathbb T} \tilde V(y,V) = - \int_{\mathbb T^2} V$, we deduce that
\begin{equation}\label{e:cons_A}
A_{h,k}(V,1)= 0, \qquad \forall V \in \calX_h.
\end{equation}

\subsubsection{Approximations of the System Velocity}\label{ss:velocity}

We now discuss the approximation of the velocity $\bu$ in \eqref{eq:sqg_frac} for a given approximation $\Theta \in \calX_{h,0}$ of $\theta \in L^2_\#(\mathbb T^2)$.
It is performed in two steps. First, we use the approximation of the inverse fractional Laplacian \eqref{e:appx_inv} to define $\Psi_k \in \calX_{h,0}$ as
$$
\Psi_k := \frac 1 \pi k \sum_{\ell=-M}^{M} e^{(1-s)y_{\ell}}W(y^{\ell}) \approx \psi = (-\Delta)^{-1/2}\theta,
$$
where $W(y^\ell) \in \mathcal X_{h,0}$ solves
$$
e^{y_{\ell}} \int_{\mathbb T^2} W(y_{\ell}) R  + \int_{\mathbb T^2} \nabla W(y_{\ell}) \cdot \nabla R = \int_{\mathbb T^2} \Theta R, \qquad \forall R \in \calX_h.
$$
Then, the velocity approximation $\bU_k := \bU_k(\Theta) \in \left\lbrack\calX_h\right\rbrack^2$ is defined as the componentwise Cl\'ement interpolant \cite{MR0400739}, see also \cite{MR1011446}, of $\nabla^\perp \Psi_k$.
Notice that this construction does not guarantee that $\textrm{div}(\bU_k) = 0$.
This possible lack of conservation property will be accounted for in the design of the algorithm for the temperature potential equation below, see for instance Lemma~\ref{lemma:cons}.

\subsection{Approximation of the Temperature Potential Equation \eqref{eq:sqg_temp}}

The  third order (three stages) Strong Stability Preserving Runge-Kutta (SSP-RK3) {\textcolor{white}.}  method \cite{Shu_Osher1988} is advocated for the approximation of the time evolution in \eqref{eq:sqg_temp}.
We recall that one step of the SSP-RK3 scheme on an homogeneous equation $\frac{d}{dt} v = f(v)$ consist of computing $v^{n+1}$ from  $v^n$ as follows:
\begin{align*}
v^{(1)}& : = v^n+\Delta t_{n+1} f(v^n), \\
v^{(2)} &:= \frac 3 4 v^n + \frac 1 4 (v^{(1)}+\Delta t_{n+1} f(v^{(1)})), \\
v^{n+1} &:= \frac 1 3v^n + \frac 2 3 (v^{(2)}+\Delta t_{n+1} f(v^{(2)})).
\end{align*}
Since SSP-RK3 consists of a linear combination of three forward Euler steps, we restrict the  discussion below to the construction of the latter.
The finite element method for the space discretization is based on the finite element spaces \eqref{e:FEM_space} enhanced with adequate integration formulas and vanishing entropy viscosity stabilizations. 
As we shall see, these choices lead to a method satisfying a maximum principle when $\varkappa=0$ (see Theorems~\ref{th:mp} and \ref{t:FCT}) while retaining in practice the second order accuracy (see Section~\ref{sec:numerical_tests}).

\subsubsection{The Conservative Galerkin Method}

The time interval $[0,T]$ is split onto $N$ intervals of variable length $\Delta t_n$, $n=1,...,N$ and we set $t_n := \sum_{m=1}^n \Delta t_m$, $n=0,...,N$ to denote the breakpoints of this subdivision.
Let $\Theta^0 \in \mathcal X_{h,0}$ be an approximation of the initial temperature potential $\theta_0 \in L^2_\#(\mathbb T^2)$.
We compute $\Theta_k^{n}$, $n=1,...,N$ recursively as detailed now. 
Given the temperature approximation $\Theta_{k}^{n} \in \mathcal X_{h,0}$ and the velocity approximation $\bU_k^{n}:=\bU_k(\Theta_k^{n}) \in  \left\lbrack\calX_h\right\rbrack^2$ (see Section~\ref{ss:velocity}), we define $\Theta_k^{n+1} \in \calX_h$ as the solution to
\begin{equation}\label{eq:sqg:femA}
  \int_{\mathbb T^2} \frac{\Theta_k^{n+1} - \Theta_k^{n}}{\Delta t_{n+1}}  \varphi_i   + \int_{\mathbb T^2} \bU_k^{n} \SCAL \GRAD\Theta_k^n \ \varphi_i
  + \varkappa  A_{h,k}(\Theta_k^n,\varphi_i) = 0, \qquad 1\leq \varphi_i \leq I.
\end{equation}

In general $ \int_{\mathbb T^2}  \Theta_k^{n+1} \not = 0$ due to non-conservative approximation of the velocity, i.e. $\textrm{div}(\bU_k^n) \not = 0$. To circumvent this issue, we follow \cite[Sec.~3.2]{Guermond_Popov_2016a} and replace the flux $\bU^n_k \Theta^n_k$ by its linear interpolation $\sum_{j=1}^I \bu_j^n \theta_j^n \varphi_j$ with $\bu_j^n:= \bU_k^n(\bx_j)$ and $\theta^n_j := \Theta_k^n(\bx_j)$. The velocity term in \eqref{eq:sqg:femA} is thus approximated by
\[
	\int_{{\mathbb T^2}} \bU^n_k \SCAL \GRAD\Theta^n_k \ \varphi_i 
		\approx \sum_{j=1}^I  \theta^n_j\bu_j^n \SCAL \int_{{\mathbb T^2}} \GRAD \varphi_j \varphi_i 
	= \sum_{j=1}^I \bu_j^n \SCAL \bc_{ij} \theta^n_j,  \quad i=1,\ldots,I,
\]
where we introduced the notation  $\bc_{ij}:=\int_{{\mathbb T^2}} \GRAD \varphi_j \varphi_i$. In turn, \eqref{eq:sqg:femA} reduces to a system of equations for the coefficient $(\theta_j^{n+1})_{j=1}^I$ of $\Theta_k^{n+1} \in \calX_h$, namely
\begin{equation}\label{eq:sqg:fem}
 \sum_{j=1}^I m_{ij}\frac{\theta_j^{n+1}-\theta_j^n}{\Delta t_{n+1}} +  \sum_{j=1}^I \bu_j^n \SCAL \bc_{ij} \theta^n_j  + \varkappa  \sum_{j=1}^I \theta_j^n A_{h,k}(\varphi_j,\varphi_i) = 0, \quad i=1,\ldots,I.
 \end{equation}
Notice that the relations 
\begin{equation}\label{e:cij_prop}
\bc_{ij} = - \bc_{ji}\quad \textrm{and} \qquad  \sum_{j=1}^I \bc_{ji} = 0
\end{equation} 
hold. 
As a consequence,
\begin{equation}\label{e:u_cons}
\sum_{i=1}^I  \sum_{j=1}^I \bu_j^n \SCAL \bc_{ij} \theta^n_j = \sum_{i=1}^I  \sum_{j=1}^I \left(\bu_j^n \bc_{ij}\theta^n_j  - \bu_i^n \bc_{ij}\theta^n_i\right)  = 0
\end{equation}
and we now have, recalling the definition \eqref{e:m} of $m_{ij}$, 
$$
\int_{\mathbb T^2} \Theta_k^{n+1} = \sum_{i,j = 1}^{I} m_{ij} \theta_j^{n+1} = \sum_{i,j = 1}^{I} m_{ij} \theta_j^{n} = \int_{\mathbb T^2} \Theta_k^{n} = 0
$$
thanks to \eqref{e:cons_A} as well.
From this, we deduce that $\Theta_k^{n+1} \in \calX_{h,0}$.

The numerical scheme \eqref{eq:sqg:fem} does not satisfy a maximum principle, it is actually not even stable in $L^\infty(\mathbb T^2)$.
In the next sections we modify the above scheme and obtain low order method satisfying a maximum principle (or invariant domain property when $\varkappa>0$).

\subsubsection{Low Order Scheme}\label{s:num_quad}

We follow the approach in \cite{Guermond_Nazarov_2014} modifying  \eqref{eq:sqg:fem} with appropriate mass lumping quadratures and incorporating a low order (graph) viscosity. 
When $\varkappa = 0$, we show that the resulting scheme satisfies a discrete maximum principle property
  \[
\min_{j \in \calI(i)} \theta_j^n \le \theta_i^{n+1}  
  \le \max_{j \in \calI(i)} \theta_j^n.
  \]
When $\varkappa >0$, we cannot guaranteed that $\theta_i^{n+1}$ strictly lies in $[\min_{j \in \calI(i)}\! \theta_j^n,\min_{j \in \calI(i)}\! \theta_j^n]$ without additional assumptions. We postpone this discussion to Remark~\ref{r:viscouscase} below.

We start with a mass lumping quadrature formula (see the definitions \eqref{e:m}) for the time derivative term 
$$ 
\sum_{j=1}^I m_{ij}\frac{\theta_j^{n+1}-\theta_j^n}{\Delta t_{n+1}} \approx m_i \frac{\theta_i^{n+1}-\theta_i^n}{\Delta t_{n+1}}
$$ 
and the diffusion term 
$$ 
\sum_{j=1}^I \theta_j^n  A_{h,k}(\varphi_j,\varphi_i) \approx m_i A_i(\Theta_k^n),
$$
where
\begin{equation}\label{eq:Di}
A_i(\Theta_k^n) := 2\frac{\sin( \pi s)}{\pi} k \sum_{\ell = -M}^M  e^{sy_\ell}(\theta_i^n+\tilde v_i(y_\ell;\Theta_k^n))
\end{equation}
and $\tilde v_i := \tilde v_i(y_\ell;\Theta_k^n)$, $i=1,...,I$, satisfies 
$$
m_i \tilde v_i  + e^{-y_\ell} \sum_{j \in I(i)} \tilde v_j \int_{\mathbb T} \nabla \varphi_j \cdot \nabla \varphi_i = - m_i \theta_i^n;
$$
compare with \eqref{e:apprx_action} and \eqref{eq:lapl_sinc}. Notice that because $A_{h,k}(\Theta_k^n,1)=0$, see \eqref{e:cons_A}, we have
\begin{equation}\label{e:Ai_prop}
\sum_{i=1}^I m_i A_i(\Theta_k^n) = 0,
\end{equation}
instrumental property to preserve the average of the buoyancy after each time step (see Lemma~\ref{lemma:cons}).
It is well known that standard continuous Galerkin methods are not stable in the approximation of first order hyperbolic systems \cite[Chapter~5]{Ern_Guermond_2004}. 
We propose here an artificial (vanishing) graph viscosity approach to stabilize our system, see \cite[Sec.3.2]{Guermond_Popov_2016} and \cite[Sec.4.2]{Guermond_Nazarov_2014} and incorporate a diffusing term
$$
\sum_{j \in \calI(i)} d_{ij}^{L,n} \theta_j^n
$$
in \eqref{eq:sqg:fem}. The coefficients $d_{ij}^{L,n}$ are defined for $i\not = j$ as
\begin{align}\label{eq:dijL}
	d^{L,n}_{ij} 
	:=
	 \max(\lambda_{\max}(\bn_{ij}, \theta_i^n, \theta_j^n) \|\bc_{ij}\|_{\ell^2},  
	 \lambda_{\max}(\bn_{ji}, \theta_j^n, \theta_i^n) \|\bc_{ji}\|_{\ell^2}),
\end{align}
where the local maximum wave speed are given by
\begin{equation}\label{def:max_speed}
 	\lambda_{\max}:=\lambda_{\max}(\bn_{ij}, \theta_i^n, \theta_j^n) 
	:= \max(|\bu_i^n\SCAL \bn_{ij}|, |\bu_j^n\SCAL \bn_{ij}|),
\end{equation}
with $\bn_{ij}:= \bc_{ij} / \|\bc_{ij}\|_{\ell^2}$ and $\| \bc_{ij} \|_{\ell^2}$ denotes the Euclidian norm of the vector $\bc_{ij} \in \mathbb R^2$.

For $i=j$, we set
\begin{align}\label{eq:diiL}
	d^{L,n}_{ii} 
	:= -\!\!\!\! \sum_{i\not=j \in \calI(i)} d_{ij}^{L,n}.
\end{align}
For future reference, we record the properties of the artificial viscosity coefficients:
\begin{equation}\label{eq:dij_prop}
d_{ij}^{L,n} \ge 0, \qquad d_{ij}^{L,n} = d_{ji}^{L,n},   \quad \textrm{and} \quad \sum_{j\in\calI(i)}d_{ij}^{L,n} = \sum_{i\in\calI(j)}d_{ij}^{L,n}  = 0.
\end{equation}

We are now in position to define the low order scheme associated with \eqref{eq:sqg:fem}: for $\Theta^n_{k} = \sum_{i =1}^I \theta_i^n \varphi_i$ and 
$\bU_{k}^n = \sum_{i=1}^I \bu_i^n \varphi_i$, we determine $\Theta^{L,n+1}_{k} = \sum_{i =1}^I \theta_i^{L,n+1} \varphi_i$ from the independent and explicit relations
\begin{equation}\label{eq:sqg:lo}
\begin{split}
m_i\theta_i^{L,n+1}  &= m_i\theta^{n}_i - \Delta t_{n+1} \sum_{j\in \calI(i)} \bu_j^n \SCAL \bc_{ij} \theta^n_j \\
&\quad - \varkappa 
	\Delta t_{n+1} m_iA_i(\Theta_k^n) + \Delta t_{n+1} \sum_{j \in I(i)} d_{ij}^n \theta_j^n, \qquad 1\leq i \leq I.
\end{split}
	\end{equation}

The properties of the above scheme are discussed next.
We start with a lemma ensuring that the discrete scheme preserves the average of the buoyancy, critical property to define the velocity (see Section \ref{ss:velocity}).

\begin{lemma}\label{lemma:cons}
The low order scheme defined by the relations \eqref{eq:sqg:lo} is conservative, i.e. 
$$
\int_{\mathbb T^2} \Theta_k^{L,n} = \int_{\mathbb T^2} \Theta_h^n.
$$
\end{lemma}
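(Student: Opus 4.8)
The plan is to sum the defining relations \eqref{eq:sqg:lo} over the index $i$ and show that each of the three update terms on the right-hand side contributes nothing to the total. The starting observation is that, since $\int_{\mathbb T^2}\varphi_i = m_i$ by \eqref{e:m}, the integral of any finite element function equals the $m_i$-weighted sum of its nodal values; in particular
\begin{equation*}
\int_{\mathbb T^2}\Theta_k^{L,n+1} = \sum_{i=1}^I m_i\theta_i^{L,n+1}, \qquad \int_{\mathbb T^2}\Theta_k^{n} = \sum_{i=1}^I m_i\theta_i^{n}.
\end{equation*}
Summing \eqref{eq:sqg:lo} over $i=1,\ldots,I$ thus reduces the claim to verifying that the advective, diffusive, and graph-viscosity contributions each vanish once summed.

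For the advection term, I would invoke \eqref{e:u_cons} directly, which asserts precisely that $\sum_{i=1}^I\sum_{j=1}^I \bu_j^n\SCAL\bc_{ij}\theta_j^n = 0$; this is a consequence of the skew-symmetry $\bc_{ij}=-\bc_{ji}$ recorded in \eqref{e:cij_prop}. For the fractional diffusion term, the cancellation $\sum_{i=1}^I m_i A_i(\Theta_k^n)=0$ is exactly the property \eqref{e:Ai_prop}, itself inherited from the conservation identity \eqref{e:cons_A}. For the graph-viscosity term, I would exchange the order of summation and use the column-sum identity from \eqref{eq:dij_prop} to write
\begin{equation*}
\sum_{i=1}^I\sum_{j\in\calI(i)} d_{ij}^{L,n}\theta_j^n = \sum_{j=1}^I \theta_j^n \sum_{i\in\calI(j)} d_{ij}^{L,n} = 0.
\end{equation*}

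Collecting these three cancellations yields $\sum_{i=1}^I m_i\theta_i^{L,n+1}=\sum_{i=1}^I m_i\theta_i^n$, which is the desired per-step identity $\int_{\mathbb T^2}\Theta_k^{L,n+1}=\int_{\mathbb T^2}\Theta_k^{n}$. The argument is in essence a bookkeeping exercise, so I do not anticipate a genuine obstacle; the only point requiring care is to pair each term with the correct previously established structural identity. The most delicate of these is the graph-viscosity term, where one must use the column-sum (rather than row-sum) form of \eqref{eq:dij_prop} after swapping the order of summation—an exchange that is legitimate precisely because the viscosity matrix is symmetric, $d_{ij}^{L,n}=d_{ji}^{L,n}$.
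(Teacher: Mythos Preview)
Your proof is correct and follows essentially the same approach as the paper: sum \eqref{eq:sqg:lo} over $i$, invoke \eqref{e:u_cons}, \eqref{e:Ai_prop}, and \eqref{eq:dij_prop} for the three terms, and conclude via $\int_{\mathbb T^2}\Theta = \sum_i m_i\theta_i$. Your handling of the graph-viscosity term (swap the summation order and use the column-sum identity) is slightly more explicit than the paper's, but the argument is the same.
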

\begin{proof}
After summing for $i=1,...,I$ the relation \eqref{eq:sqg:lo} and using the conservation properties \eqref{e:u_cons}, \eqref{eq:dij_prop} and \eqref{e:Ai_prop},  we realize that
\begin{align*}
	 \sum_{i=1}^I m_i\theta^{L,n+1}_i = \sum_{i=1}^I m_i\theta^{n}_i.
\end{align*}
Hence,  
$$
\int_{\mathbb T^2} \Theta_k^{L,n+1} = \sum_{i=1}^I m_i\theta^{L,n+1}_i = \sum_{i=1}^I m_i\theta^{n}_i = \int_{\mathbb T^2} \Theta_k^n,
$$
which is the desired estimate.
\end{proof}

We now turn our attention to the discrete maximum principle when $\varkappa =0$.
The discrete maximum property requires a CFL type condition to hold, namely there exists a real number $0<\textrm{CFL}\leq \frac 1 2$ such that the time step are selected (on the fly) to satisfy
\begin{equation}\label{e:CFL}
\frac{\Delta t_{n+1}}{m_i} \sum_{i\not=j \in \calI(i)} d_{ij}^{L,n} \le  \textrm{CFL}.
\end{equation}
Note that $m_i \sim h^2$, $d_{ij}^{L,n} \sim \lambda^n h$, where $\lambda^n$ is a characteristic (local velocity), and thus the above condition requires that for the computation of $\Theta_k^{n+1}$, the time step $\Delta t_{n+1}$ is selected so that $\Delta t_{n+1} \lambda^n/h$ is sufficiently small. 

\begin{theorem}[Discrete maximum principle]\label{th:mp}
  Let us assume that $\varkappa = 0$ and assume that condition \eqref{e:CFL} holds for some $0<\textrm{CFL}\leq \frac 1 2$.
  Then the solution of the low order scheme \eqref{eq:sqg:lo} satisfies
 \begin{equation}\label{e:max_principle}
 \min_{j \in \calI(i)} \theta_j^n \le \theta_i^{L,n+1}  
  \le \max_{j \in \calI(i)} \theta_j^n
  \end{equation}
 for all $i=1,\ldots, I$.
  
\end{theorem}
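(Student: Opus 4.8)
The plan is to rewrite the single forward Euler update \eqref{eq:sqg:lo} (with $\varkappa=0$) for the nodal value $\theta_i^{L,n+1}$ as an affine combination of the neighbouring values $\{\theta_j^n\}_{j\in\calI(i)}$, and then to verify that this combination is in fact \emph{convex}, so that $\theta_i^{L,n+1}$ is trapped between $\min_{j\in\calI(i)}\theta_j^n$ and $\max_{j\in\calI(i)}\theta_j^n$. Since the scheme \eqref{eq:sqg:lo} is exactly one explicit step, this is all that is needed.

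First I would record two structural simplifications. Because $\mathbb T^2$ has no boundary, $\bc_{ii}=\int_{\mathbb T^2}\GRAD\varphi_i\,\varphi_i=\tfrac12\int_{\mathbb T^2}\GRAD(\varphi_i^2)=0$, so the self-advection contribution drops out. Using in addition $d_{ii}^{L,n}=-\sum_{j\ne i\in\calI(i)}d_{ij}^{L,n}$ from \eqref{eq:diiL} and dividing \eqref{eq:sqg:lo} by $m_i$, I obtain
\[
\theta_i^{L,n+1}=\Big(1-\tfrac{\Delta t_{n+1}}{m_i}\!\!\sum_{j\ne i\in\calI(i)}\!\!d_{ij}^{L,n}\Big)\theta_i^n+\tfrac{\Delta t_{n+1}}{m_i}\!\!\sum_{j\ne i\in\calI(i)}\!\!\big(d_{ij}^{L,n}-\bu_j^n\SCAL\bc_{ij}\big)\theta_j^n,
\]
which I abbreviate as $\theta_i^{L,n+1}=\sum_{j\in\calI(i)}\mu_{ij}\theta_j^n$.

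Next I would check that every coefficient is nonnegative. For $j\ne i$, definitions \eqref{eq:dijL} and \eqref{def:max_speed} give $d_{ij}^{L,n}\ge\lambda_{\max}(\bn_{ij},\theta_i^n,\theta_j^n)\|\bc_{ij}\|_{\ell^2}\ge|\bu_j^n\SCAL\bn_{ij}|\,\|\bc_{ij}\|_{\ell^2}=|\bu_j^n\SCAL\bc_{ij}|\ge\bu_j^n\SCAL\bc_{ij}$, whence $\mu_{ij}\ge0$; this is precisely the role of the graph viscosity. For the diagonal entry, the CFL condition \eqref{e:CFL} yields $\mu_{ii}=1-\tfrac{\Delta t_{n+1}}{m_i}\sum_{j\ne i}d_{ij}^{L,n}\ge1-\textrm{CFL}\ge\tfrac12>0$.

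The delicate step, which I expect to be the main obstacle, is to show that the $\mu_{ij}$ sum to one. Summing gives $\sum_{j\in\calI(i)}\mu_{ij}=1-\tfrac{\Delta t_{n+1}}{m_i}\sum_{j\ne i\in\calI(i)}\bu_j^n\SCAL\bc_{ij}$, so the combination is convex \emph{exactly} when the local quantity $\sum_{j\in\calI(i)}\bu_j^n\SCAL\bc_{ij}=\int_{\mathbb T^2}\varphi_i\,\textrm{div}\,\bU_k^n$ vanishes. This is the discrete counterpart of the incompressibility $\textrm{div}(\nabla^\perp\psi)=0$: exact at the continuous level, but only its \emph{global} version \eqref{e:u_cons} is guaranteed by the Cl\'ement reconstruction of $\bU_k^n$. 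Establishing (or enforcing through the flux treatment) this local discrete divergence-free identity is therefore the crux of the argument; once it is in hand, $\theta_i^{L,n+1}$ is a genuine convex combination of $\{\theta_j^n\}_{j\in\calI(i)}$ and the bounds \eqref{e:max_principle} follow at once.
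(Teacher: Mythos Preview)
Your decomposition has a genuine gap, and you correctly locate it: with your rewriting the coefficients sum to $1-\tfrac{\Delta t_{n+1}}{m_i}\sum_{j\in\calI(i)}\bu_j^n\SCAL\bc_{ij}$, and this is \emph{not} equal to $1$ in general, because (as the paper itself stresses in Section~\ref{ss:velocity}) the reconstructed velocity $\bU_k^n$ is \emph{not} discretely divergence-free. So the route you sketch --- hoping to establish or enforce the local identity $\sum_{j}\bu_j^n\SCAL\bc_{ij}=0$ --- does not go through.

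The paper bypasses this obstacle with a different rewriting. Instead of using only $\bc_{ii}=0$, it uses the partition-of-unity identity $\sum_{j\in\calI(i)}\bc_{ij}=0$ from \eqref{e:cij_prop} to subtract the \emph{zero} term $\bu_i^n\theta_i^n\SCAL\sum_j\bc_{ij}$ from the advection sum, yielding
\[
\sum_{j\in\calI(i)}\bu_j^n\SCAL\bc_{ij}\,\theta_j^n=\sum_{j\ne i\in\calI(i)}(\bu_j^n\theta_j^n-\bu_i^n\theta_i^n)\SCAL\bc_{ij}.
\]
After the analogous manipulation of the viscosity term, the update reads
\[
\theta_i^{L,n+1}=\Big(1-\tfrac{\Delta t_{n+1}}{m_i}\!\!\sum_{j\ne i}(-\bu_i^n\SCAL\bc_{ij}+d_{ij}^{L,n})\Big)\theta_i^n+\tfrac{\Delta t_{n+1}}{m_i}\!\!\sum_{j\ne i}(-\bu_j^n\SCAL\bc_{ij}+d_{ij}^{L,n})\theta_j^n.
\]
Now the coefficients sum to $1$ \emph{by construction}, with no divergence-free assumption needed. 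The off-diagonal coefficients are nonnegative by the same estimate you give, $d_{ij}^{L,n}\ge|\bu_j^n\SCAL\bc_{ij}|$. The diagonal coefficient, however, now picks up the extra advective piece $-\bu_i^n\SCAL\bc_{ij}\le d_{ij}^{L,n}$, so its lower bound is $1-\tfrac{\Delta t_{n+1}}{m_i}\sum_{j\ne i}2d_{ij}^{L,n}\ge 1-2\,\textrm{CFL}\ge 0$. This is precisely why the hypothesis is $\textrm{CFL}\le\tfrac12$ rather than the $\textrm{CFL}\le 1$ your diagonal bound would suggest; the stricter condition is the price for avoiding any incompressibility assumption on $\bU_k^n$.
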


\begin{proof}
Using the conservation properties \eqref{e:u_cons} and \eqref{eq:dij_prop}, we rewrite \eqref{eq:sqg:lo} as
\begin{align*}
	\theta^{L,n+1}_i =  \theta^{n}_i - \frac{\Delta t_{n+1}}{m_i} \sum_{i \not = j\in \calI(i)} (\bu_j^n \theta^n_j - \bu_i^n \theta^n_i) \SCAL \bc_{ij} +  \frac{\Delta t_{n+1}}{m_i} \sum_{i \not = j\in \calI(i)} d^{L,n}_{ij}(\theta^n_j - \theta^n_i)
\end{align*}
or, rearranging the terms, as 
$$
	\theta^{L,n+1}_i = 
\theta^{n}_i\left(1 - \frac{\Delta t_{n+1}}{m_i}\!\! \sum_{i \not=j\in \calI(i)}\!( - \bu_i^n\SCAL \bc_{ij} +  d^{L,n}_{ij}) \right)+ \frac{\Delta t_{n+1}}{m_i} \!\! \sum_{i\not=j\in \calI(i)}\! ( - \bu_j^n\SCAL \bc_{ij} + d^{L,n}_{ij}) \theta^n_j.
$$

We obtain \eqref{e:max_principle} by showing that the right hand side of the above relation is a convex combinations of $\theta^{n}_j$, $j \in \calI(i)$.
To see this, we first note that the coefficients add-up to 1.
 Moreover, from the definition \eqref{eq:dijL} of the low order viscosity coefficients $d_{ij}^{L,n}$, we have $\bu_j^n \cdot \bc_{ij} \leq d_{ij}^{L,n}$ and $-\bu_i^n \cdot \bc_{ij} \leq d_{ij}^{L,n}$.
 The former guarantees that the coefficients in front of the $\theta_j^n$ are positive.
 The latter, in conjunction with the assumption $\textrm{CFL} \leq \frac 1 2$, yields 
\[
	1 - \frac{\Delta t_{n+1}}{m_i} \sum_{i \not=j\in \calI(i)}( - \bu_i^n\SCAL \bc_{ij} +  d^{L,n}_{ij}) 
	\ge
	1 - \frac{\Delta t_{n+1}}{m_i} \sum_{i \not=j\in \calI(i)}2d^{L,n}_{ij}
	\ge 0
\]
and so the coefficient in front of $\theta_i^n$ is positive as well. 
This ends the proof.
\end{proof}

We conclude this section with a remark concerning the viscous case.
\begin{remark}\label{r:viscouscase} 
We have already mentioned that a maximum principle like \eqref{e:max_principle} does not necessarily hold without additional assumption.
However, proceeding as in the proof of Theorem~\ref{th:mp}, we can rewrite the low order scheme as
\begin{align*}
\Theta_i^{\text{L}, n+1} 
	=
	\frac12 
	&\Big[
	\big(
	1 - \frac{2\Delta t_{n+1}}{m_i} \sum_{i \not=j\in \calI(i)}( - \bu_i^n\SCAL \bc_{ij} +  d^{\text{L}, n}_{ij}) 
	\big) \theta_i^n \\
	&+
	\frac{2\Delta t_{n+1}}{m_i} \sum_{i \not=j\in \calI(i)}( - \bu_i^n\SCAL \bc_{ij} +  d^{\text{L}, n}_{ij}) \theta_j^n
	\Big]
	+
	\frac12 
	\Big[
	\theta_i^n - 2\Delta t_{n+1} \sum_{i=1}^Im_i A_i(\Theta_k^n)
	\Big].
\end{align*}
From this we see that if for some real numbers $a<b$ we have $\theta_i^n \in [a,b]$ and $\theta_i^n - 2\Delta t_{n+1} \sum_{i=1}^Im_i A_i(\Theta_k^n) \in [a,b]$, both for $i=1,...,I$, 
then $\theta_i^{L,n+1} \in [a,b]$ for $i=1,...,I$ whenever 
$$
\Delta t_{n+1} \le m_i / (4 \sum_{i\not=j \in \calI(i)} d_{ij}^{\text{L},n}).
$$
The above condition holds provided $0<\textrm{CFL}\leq \frac 1 4$.
This property is called invariant domain in \cite{Guermond_Popov_2016}.

Alternatively, if the triangulation satisfies the acute angle condition $\int_{\mathbb T^2} \nabla \varphi_i \cdot \nabla \varphi_j <0$ for $i\not = j$ along with a restriction on the sinc quadrature, then the viscous low order scheme satisfies the maximum principle property \eqref{e:max_principle}.
This finer analysis is out of the scope of this paper and we refer to \cite{BGSometimes} for additional details.
\end{remark}

\subsubsection{Higher Order Scheme} 
\label{section:EV}
The construction of the higher order scheme starts again from the Galerkin scheme \eqref{eq:sqg:fem} but with an artificial viscosity $d_{ij}^{H,n}$ chosen to vanish at a higher rate (with respect to the meshsize $h$) than \eqref{eq:dijL} and \eqref{eq:diiL} used for the low order scheme.   

Besides being of higher order, the only restriction needed on the artificial viscosity coefficients $d_{ij}^{H,n}$ is that they satisfy the properties \eqref{eq:dij_prop}.
In this work, we propose an  artificial viscosity proportional to the residual of one entropy of the buoyancy equation.
This is commonly referred to as an entropy viscosity method. 
Such strategy for conservation laws was originally proposed by \cite{Guermond_pasquetti_popov_JCP_2011} and was later extended to solve compressible flows \cite{Nazarov_Hoffman_2012, Nazarov_Larcher_2017}. 
A priori error and stability analysis of the method for some entropy functionals were investigated in \cite{Nazarov_2013} and \cite{Bonito_etal_2014}.

Given $\Theta_k^n= \sum_{j=1}^I \theta_j^n \varphi_i$ and $\bU_k^n=\sum_{j=1}^I \bu_j^n \varphi_i$, the higher order scheme consists of finding  $\Theta_k^{H,n+1} = \sum_{j=1}^I \theta_j^{H,n+1} \varphi_i$ from the system of equations
\begin{equation}\label{eq:sqg:ho}
\begin{split}
 \sum_{j=1}^I m_{ij} \theta_j^{H,n+1} = 
  &\sum_{j=1}^I m_{ij}\theta_j^n -  \Delta t_{n+1}\sum_{j=1}^I \bu_j^n \SCAL \bc_{ij} \theta^n_j  \\
  &- \varkappa   \Delta t_{n+1} m_i A_i(\Theta_n) +  \Delta t_{n+1}\sum_{j=1}^I d_{ij}^{H,n} \theta_j^n,
\end{split}
\end{equation}
for $i=1,\ldots,I$ and where the higher order entropy residual viscosity coefficients $d_{ij}^{H,n}$ are yet to be determined.
This is the focus of the remaining part of this section but before embarking in this discussion, we point out that unlike for the lower order scheme \eqref{eq:sqg:lo}, we use the consistent mass matrix for the time derivative term (no mass lumping) to reduce the dispersion error generated by the mass lumping in the low order scheme.

Entropy residuals have been discussed in details in the literature, see for instance \cite{Guermond_pasquetti_popov_JCP_2011}.
In our particular context, for a given $q$ sufficiently smooth, we define the entropy residual by $\calR^n(Q):= \sum_{i=1}^I \calR_i^n(q) \varphi_i$ where
$$
	\calR_i^n(q) 
	:= 
	\int_{{\mathbb T^2}}
	\Big(
	\frac{q - \Theta^{n}_k}{\Delta t_{n+1}}
	+ \bU^n_k\SCAL\GRAD \Theta^n_k + \varkappa \sum_{j=1}^I  A_j(\Theta^n_k) \varphi_j \Big) \eta'(\Theta_k^n) \varphi_i,
$$
where the entropy function $\eta$ is taken to be $\eta(x):= \frac 1 2 x^2$. 
Note that the action of the operator $(-\Delta)^s$ is not well defined on  $\Theta^n_k$ and is therefore replaced in $\calR_i^n$ by $\sum_{j=1}^I  A_j(\Theta^n_k) \varphi_j$.

One of the difficulty when using residual based viscosity on dynamical systems is the proper handling of the time derivative and in particular what function $q$ to use.
In order to avoid interferences from the time discretization in the computation of the residual, we resort to a novel idea from \cite{Guermond_et_al_2018}, see also \cite{Lu_Nazarov_Fischer_2019}.
To motivate the final expression of the residual we  (formally) consider the solution $\theta^G$ of the following  implicit time discretization
\begin{equation}\label{eq:sqg:galerkin}
\frac{\theta^G-\theta_j^n}{\Delta t_{n+1}} + \bU_k^n \SCAL \GRAD \theta^G  + \varkappa(-\Delta)^{s} \theta^G = 0.
\end{equation}
The entropy residual evaluated at $q=\theta^G$ reads
$$
\calR^n_i (\theta^G)
	= 
	\int_{{\mathbb T^2}}
	\Big(
	-\bU^n_k \SCAL \GRAD\theta^{\text{G}}
  	- \varkappa (-\Delta)^s \theta^{\text{G}} 
	+ \bU^n_k\SCAL\GRAD \Theta^n_k + \varkappa \sum_{j=1}^I  A_j(\Theta^n_k) \varphi_j\Big) \eta'(\Theta_k^n) \varphi_i.
$$
The above expression is not practical because of the cost in computing $\theta^G$. Instead, one can use the plain Galerkin soluton
$\Theta^{G,n+1}_k := \sum_{j=1}^I \theta_j^{G,n+1} \in \calX_h$ defined as the higher order scheme but without artificial viscosity
\begin{equation}\label{eq:sqg:G}
 \sum_{j=1}^I m_{ij}\frac{\theta_j^{G,n+1}-\theta_j^n}{\Delta t_{n+1}} = -  \sum_{j=1}^I \bu_j^n \SCAL \bc_{ij} \theta^n_j  - \varkappa  m_i A_i(\Theta_k^n), \quad i=1,\ldots,I,
\end{equation}
which leads to the final expression for the entropy residual
\begin{equation}\label{eq:res}
	\calR^n_i 
	:= 
	\int_{{\mathbb T^2}}
	\Big(
	\bU^n_k \SCAL \GRAD (\Theta_k^n-\Theta_k^{\text{G},n+1})
  	+ \varkappa \sum_{j=1}^I  (A_j(\Theta_k^n)-A_j(\Theta_k^{G,n+1})) \varphi_j
	\Big)\eta'( \Theta_k^n) \varphi_i.
\end{equation}
Then, the high order nonlinear viscosity in \eqref{eq:sqg:ho} is defined by 
\begin{align}\label{eq:dijH}
	d^{\text{H}, n}_{ij} 
	:=
	\min
	\Big(
	d^{\text{L}, n}_{ij}, 
	c_{\text{EV}}\max\Big(
	\frac{\calR^n_i}{\widetilde{\eta^n_i}}, 
	\frac{\calR^n_j}{\widetilde{\eta^n_j}}
	\Big)
	\Big),	
\end{align}
where $c_{\text{EV}}$ is the stablization parameter (typically $0.1 \leq c_{\text{EV}} \leq 1$.
The normalization coefficient $\widetilde{\eta^n_i}$ in \eqref{eq:dijH} are given by
\begin{equation}\label{eq:normalization}
  \widetilde{\eta^n_i} :=
  \max(
  \big|
  \max_{j\in\calI(i)}\eta(\Theta^n_j) - \min_{j\in\calI(i)}\eta(\Theta^n_j)
  \big|, \epsilon |\eta(\Theta^n_i)|),
\end{equation}
with $\epsilon := 10^{-8}$ (or below the scheme accuracy) is a small safety factor.
We refer to Section~\ref{sec:numerical_tests} for a discussion on the effect of $c_{\textrm{EV}}$ and on the normalization.

\subsubsection{Flux corrected transport (FCT) limiting}
\label{section:FCT}

In the above sections, we introduced two methods: a first-order maximum principle (or invariant domain) preserving scheme and a high order nonlinear viscosity scheme. The FCT algorithm below, first introduced by \cite{Boris_books_JCP_1973}, ensures that the high order solution satisfies the discrete maximum principle (or invariant domain property). 

We relate the high and low order schemes by subtracting \eqref{eq:sqg:lo} from \eqref{eq:sqg:ho}:
$$
 \sum_{j \in \calI(i)} m_{ij} \theta_j^{H,n+1} = m_i \theta_i^{\textrm{L},n+1} + \sum_{j \in \calI(i)}m_{ij} (\theta_j^n-\theta_i^n)
 + \Delta t_{n+1}\sum_{j \in \calI} (d_{ij}^{H,n}-d_{ij}^{L,n}) \theta_j^n.
$$
Note that to derive the above relation, we used the definition  $m_i = \sum_{j\in \calI(i)} m_{ij}$. 
Adding $m_{i} \theta_i^{H,n+1}$ on both sides of the equation, we get
\begin{equation}\label{e:H-L}
\begin{split}
m_{i} \theta_i^{H,n+1} = & m_i \theta_i^{\textrm{L},n+1} + \sum_{j \in \calI(i)}m_{ij} (\theta_j^n-\theta_i^n)- \sum_{j \in \calI(i)}m_{ij} ( \theta_j^{\textrm{H},n+1} - \theta_i^{\textrm{H},n+1})\\
& + \Delta t_{n+1}\sum_{j\in \calI(i)}^I (d_{ij}^{H,n}-d_{ij}^{L,n}) \theta_j^n\\
& =: m_i \theta_i^{\textrm{L},n+1} + \Delta t_{n+1} \sum_{j \in \calI(i)} \calA_{ij}.
\end{split}
\end{equation}
The coefficient $\calA_{ij}$ can be rewritten  using the conservative properties   \eqref{eq:dij_prop}, \eqref{eq:dijH} of the artificial diffusions coefficient as
\begin{equation}\label{eq:aij}
\begin{aligned}
\calA_{ij} = & - \frac{m_{ij}}{\Delta t_{n+1}}
	\Big(
	(\theta^{\text{H},n+1}_j - \theta^{n}_j)
	-
	(\theta^{\text{H},n+1}_i - \theta^{n}_i)
	\Big) \\
	&+ (d_{ij}^{\text{H},n} - d_{ij}^{\text{L},n}) (\theta^{n}_j - \theta^{n}_i).
\end{aligned}
\end{equation}
From this representation, one sees that $\calA_{ij}=-\calA_{ji}$.

The low order solution as proven earlier preserves the discrete maximum principle ($\varkappa=0$).
$$
\theta_\textrm{min}^n := \min_{j=1,...,I} \theta_j^n \leq \theta^{\text{L},n+1}_i \leq  \max_{j=1,...,I} \theta_j^n  =:\theta_\textrm{max}^n.
$$
However, the high order solution may violate this maximum principle. The idea of Zalesak \cite{Zalesak_1979} is to introduce a limiter matrix  of coefficient $\calL_{ij} \geq 0$  to guarantee that the high order solution remains within $[\theta_\textrm{min}^n,\theta_\textrm{max}^n]$ while  retaining high-order accuracy. 
To make this more precise, we write
$$
\theta_{\min}^n 
 = 
\theta_i^{\text{L}, n+1} + (\theta_{\min}^n  - \theta_i^{\text{L}, n+1}) 
= 
\theta_i^{\text{L}, n+1} + \frac{\Delta t_{n+1}}{m_i} Q_i^-,
$$
where $Q_i^- := \frac{m_i}{\Delta t_{n+1}} (\theta^{\min,n}_i - \theta^{\text{L},n+1}_i)$, $i=1,...,I$.
Furthermore, using the notations $P_i^- := \sum_{j\in \calI(i)} \min\{0, \calA_{ij}\}$ and $R_i^-:= \min \Big\{ 1, \frac{Q_i^-}{P_i^-}\Big\}$ for $i=1,...,I$, we deduce that
\begin{equation}\label{eq:theta_min}
\theta_{\min}^n  \leq \theta_i^{\text{L}, n+1} + \frac{\Delta t_{n+1}}{m_i}
\sum_{j\in \calI(i), \ \calA_{ij} \le 0}  R_i^-  \calA_{ij}.
\end{equation}
Similarly, upon defining $Q_i^+ := \frac{m_i}{\Delta t_{n+1}} (\theta_{\max}^n - \theta^{\text{L},n+1}_i)$, $P_i^+ := \sum_{j\in \calI(i)} \max\{0, \calA_{ij}\}$ and $R_i^+ := \min \Big\{ 1, \frac{Q_i^+}{P_i^+}\Big\}$, we have
\begin{equation}\label{eq:theta_max}
\theta_{\max}^n  \geq  \theta_i^{\text{L}, n+1} + \frac{\Delta t_{n+1}}{m_i}
\sum_{\substack{j\in \calI(i) \\ \calA_{ij} \ge 0}}  R_i^+  \calA_{ij}.
\end{equation}
In view of \eqref{eq:LHlij}, \eqref{eq:theta_min} and \eqref{eq:theta_min}, we define 
\begin{equation}\label{eq:lij}
		\calL_{ij} := 
		\begin{cases}
		\min\{R_i^+, R_j^-\}, \quad \text{if } \calA_{ij} \ge 0, \\
		\min\{R_i^-, R_j^+\}, \quad \text{otherwise}
		\end{cases}
	\end{equation}
and the coefficients of the FCT solution $\Theta_k^{n+1}$ are obtained from the relation
\begin{equation}\label{eq:LHlij}
	 \theta^{n+1}_i 
	= \theta^{\text{L},n+1}_i + \frac{ \Delta t_{n+1}}{m_i} \sum_{j\in \calI(i)} \calL_{ij}\calA_{ij};
\end{equation}
compare to \eqref{e:H-L}. 
We have the following result.

\begin{theorem}\label{t:FCT}
The FCT solution $\Theta_k^{n+1}= \sum_{i=1}^I \theta_i^{n+1} \varphi_i$ of \eqref{eq:LHlij} with $\Theta_k^{\textrm{L},n+1} = \sum_{i=1}^I \theta_i^{\textrm{L},n+1} \varphi_i$ satisfies
\begin{equation}\label{e:FCTc}
\int_{\mathbb T^2} \Theta_k^{n+1} = \int_{\mathbb T^2} \Theta_k^{L,n+1}.
\end{equation}
Furthermore, if for some $a,b \in \mathbb R$ the lower order scheme satisfies $\theta_i^{\textrm{L},n+1} \in [a,b]$ for all $i=1,...,I$ and all $n\geq 1$, then 
\begin{equation}\label{e:FCTr}
\theta_i^{n+1} \in [a,b], \quad \forall i=1,...,I, \ \forall n\geq 1.
\end{equation}
\end{theorem}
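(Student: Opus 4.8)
The plan is to treat the two assertions of the theorem separately, since they rest on different structural features of the flux correction. For the conservation identity \eqref{e:FCTc}, I would multiply the defining relation \eqref{eq:LHlij} by $m_i$ and sum over $i=1,\dots,I$, recalling that $\int_{\mathbb T^2}\Theta_k^{n+1}=\sum_{i=1}^I m_i\theta_i^{n+1}$; the claim then reduces to showing that the corrected flux sum $\sum_{i=1}^I\sum_{j\in\calI(i)}\calL_{ij}\calA_{ij}$ vanishes. The key observation is that the limiter is symmetric, $\calL_{ij}=\calL_{ji}$: whenever $\calA_{ij}\ge 0$ we have $\calA_{ji}=-\calA_{ij}\le 0$, so the two branches of the definition \eqref{eq:lij} return the common value $\min\{R_i^+,R_j^-\}$ for the pair and its transpose. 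Combined with the antisymmetry $\calA_{ij}=-\calA_{ji}$ recorded after \eqref{eq:aij}, every ordered pair $(i,j)$ cancels against $(j,i)$ in the double sum (the diagonal contributing nothing since $\calA_{ii}=0$), so the sum is zero and \eqref{e:FCTc} follows.

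For the bound-preservation statement \eqref{e:FCTr}, I would first prove the sharper node-by-node enclosure $\theta_i^{n+1}\in[\theta_{\min}^n,\theta_{\max}^n]$ and only afterwards pass to $[a,b]$. Starting from \eqref{eq:LHlij}, I split the flux sum at node $i$ according to the sign of $\calA_{ij}$. For the upper bound, the terms with $\calA_{ij}<0$ are nonpositive (since $\calL_{ij}\ge 0$) and are discarded, while on the terms with $\calA_{ij}\ge 0$ the definition \eqref{eq:lij} gives $\calL_{ij}=\min\{R_i^+,R_j^-\}\le R_i^+$; invoking the already-established inequality \eqref{eq:theta_max} then yields $\theta_i^{n+1}\le\theta_{\max}^n$. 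The lower bound is symmetric: the terms with $\calA_{ij}\ge 0$ are now nonnegative and discarded, and on the negative terms $\calL_{ij}\le R_i^-$ combines with \eqref{eq:theta_min} to give $\theta_i^{n+1}\ge\theta_{\min}^n$. The only facts about the Zalesak coefficients used here are $0\le R_i^\pm\le 1$ and the inequalities \eqref{eq:theta_min}--\eqref{eq:theta_max} themselves, whose derivation rests on $R_i^\pm=\min\{1,Q_i^\pm/P_i^\pm\}$.

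It remains to upgrade the enclosure $[\theta_{\min}^n,\theta_{\max}^n]$ to the fixed interval $[a,b]$, which I would do by induction on $n$. Assuming the FCT iterate satisfies $\theta_j^n\in[a,b]$ for every $j$, we have $a\le\theta_{\min}^n$ and $\theta_{\max}^n\le b$, so the node-by-node bound just proved gives $\theta_i^{n+1}\in[\theta_{\min}^n,\theta_{\max}^n]\subseteq[a,b]$, closing the induction once the initial iterate is placed in $[a,b]$. The hypothesis that the low order solution obeys $\theta_i^{\textrm{L},n+1}\in[a,b]$ serves two purposes here: via Theorem~\ref{th:mp} in the inviscid case (and Remark~\ref{r:viscouscase} in the viscous case) it guarantees $\theta_i^{\textrm{L},n+1}\in[\theta_{\min}^n,\theta_{\max}^n]$, which forces $Q_i^+\ge 0$ and $Q_i^-\le 0$ so that the limiters $R_i^\pm$ (and hence $\calL_{ij}$) are well defined and nonnegative; and it furnishes the interval $[a,b]$ in which the induction is run.

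I expect the main obstacle to be the sign bookkeeping rather than any deep difficulty. The quantities carrying a minus superscript ($P_i^-$, $Q_i^-$, and the fluxes with $\calA_{ij}<0$) are all nonpositive, and the two supporting inequalities $R_i^+P_i^+\le Q_i^+$ and $R_i^-P_i^-\ge Q_i^-$ point in opposite directions; aligning these correctly with the split of the flux sum and with the pre-established bounds \eqref{eq:theta_min}--\eqref{eq:theta_max} is where the argument is most error-prone. The symmetry $\calL_{ij}=\calL_{ji}$ needed for conservation is the one genuinely structural point, but it follows at once from reading both branches of \eqref{eq:lij} for a pair and its transpose.
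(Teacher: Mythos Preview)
Your proposal is correct and follows exactly the route the paper takes: conservation via the symmetry of $\calL_{ij}$ together with the skew-symmetry of $\calA_{ij}$, and bound preservation by reading off the inequalities \eqref{eq:theta_min}--\eqref{eq:theta_max} from the definition \eqref{eq:lij} of the limiter. The paper's own proof is a two-line sketch; your write-up simply unpacks the sign bookkeeping and makes the induction to the fixed interval $[a,b]$ explicit, which the paper leaves to the reader.
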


\begin{proof}
For the conservative property, observe that $\calA_{ij}$ is skew-symmetric and $\calL_{ij}$ is symmetric. Then multiplying \eqref{eq:LHlij} by $m_i \varphi_i$ and summing over $i$ we get \eqref{e:FCTc}.
Relation \eqref{e:FCTr} follows directly from the definition of $\calL_{ij}$. 
The proof is complete.
\end{proof}

We conclude this section with a summary of one Euler step for the approximation of the buoyancy equation.

\begin{algorithm}[H]
\renewcommand{\algorithmicrequire}{\textbf{Input:}}
\renewcommand{\algorithmicensure}{\textbf{Output:}}
\caption{Limiting algorithm for potential temperature equation}
\label{fct}
\begin{algorithmic}[1]
\Require $\Theta_k^{n}$, $\bU_k^n$, $\varkappa$ and $\Delta t_{n+1}$
\Ensure $\Theta_k^{n+1}$
\State Compute  $d^{\text{L},n}_{ij}$ from \eqref{eq:dijL} and the low order solution $\theta^{\text{L},n+1}_i$ defined by \eqref{eq:sqg:lo};
\State Compute  $\Theta^{\textrm{G},n+1}_k$ defined by \eqref{eq:sqg:G} to construct $d^{\text{H},n}_{ij}$  in \eqref{eq:dijH};
\State Compute  the higher order solution $\theta^{\text{H},n+1}_i$ defined by \eqref{eq:sqg:ho};
\State Compute the matrix $\calA_{ij}$ in \eqref{eq:aij} and $\calL_{ij}$ in \eqref{eq:lij};
\State Compute $\Theta_k^{n+1}$ using \eqref{eq:LHlij}.
\end{algorithmic}
\end{algorithm}

\section{Numerical Illustrations}
\label{sec:numerical_tests}

In this section, we solve several benchmark problems to validate the proposed numerical scheme and present novel insightful simulations. 
The smooth convection problem in Section~\ref{sec:smooth_conv} validates the entropy residual viscosity model \eqref{eq:dijH} and observe that the FCT scheme preserve the high order accuracy of the high order scheme.
The discretization of the fractional diffusion operator is investigated in Section~\ref{sec:smooth_frac}.
In Sections~\ref{s:vortex} and \ref{s:viscous_transition}, we perform standard benchmarks for the SQG system: rotating vortices and initial data with saddle structures leading to sharp transitions.
We conclude with a turbulence study in Section~\ref{sec:2d_turbulence} and show that our numerical scheme exhibit the theoretical predictions of the Kolomogorov energy decay rate.

In order to plot the evolution of the buoyancy approximation, we denote by $\Theta_k(t)$ the continuous piecewise time reconstruction defined  by $\Theta_k(t)|_{[t_n,t_{n+1}]}:=\Theta_k^n + (\Theta_k^{n+1}-\Theta_k^n) (t-t_n)/\Delta t_{n+1}$.

\subsection{Smooth convection problem}\label{sec:smooth_conv}
We consider the inviscid SQG equations, \ie $\varkappa =0$, the convection field is given by $\bu= (1,1)^\top$, and initial data is a smooth function defined as 
\[
\theta_0(x_1,x_2) = \sin x_1 \sin x_2 + \cos x_2.
\]
This is a pure convection problem with constant transport illustrating the differences between the low order, the higher order and FCT schemes.

We run the problem on a sequence of meshes until the final time $T=2\pi$. The time steps is uniform over the entire simulation and chosen so that $\textrm{CFL}= 0.2$, see \eqref{e:CFL}.
Note that in view of Theorem~\ref{t:FCT}, a larger $\textrm{CFL}$ value than needed for the stabilized schemes is chosen to guarantee the stability of the Galerkin scheme used for comparison purposes. 
The nonlinear entropy residual parameter in \eqref{eq:dijH} is set to $c_{\textrm{EV}}=1$.

The results of the numerical simulation are collected in Table~\ref{table:2Dconvection}.
The first-row block corresponds to Galerkin solution \eqref{eq:sqg:G}, \ie without any stabilization terms. The second and third-row blocks correspond to the entropy viscosity solution described in Section~\ref{section:EV} and the FCT solution described in Section~\ref{section:FCT}.  We compute the errors at the final time for the $L^1(\mathbb T^2)$-, $L^2(\mathbb T^2)$- and $L^\infty(\mathbb T^2)$-norms for several spacial resolutions (uniform triangulations) along with their associated rate of convergence.
We observe a second-order convergence rate in the $L^1(\mathbb T^2)$- and $L^2(\mathbb T^2)$-norms but the entropy viscosity solution and the FCT solution deliver suboptimal rate in the $L^{\infty}(\mathbb T^2)$-norm.
Obtaining optimal rates in the $L^{\infty}(\mathbb T^2)$-norm for limited solutions is notoriously difficult, see \eg \cite{Guermond_Popov_2017}.
However, we report in the fourth and fifth row blocks of Table~\ref{table:2Dconvection}, FCT simulations obtained using a different normalization term
\begin{equation}\label{eq:normalization2}
  \widetilde{\eta_2^n} :=
  \max( \widetilde{\eta^n_i},  |\eta(\Theta^n_i)|)
\end{equation}
and leading to optimal second order convergence rates in all norms.
Although not optimal in the maximum norm, we use for the rest of the paper the normalization \eqref{eq:normalization} because of its robustness on nonlinear problems.

The values of the kinetic energy and helicity \eqref{eq:kinetic_helicity} are given in the last two columns of Table~\ref{table:2Dconvection}. We see that for the finer meshes the method recovers the kinetic energy and helicity to their reference values of $14.8041$ and  
  $26.4241$ computed with the initial condition on the finest mesh. 
  Note that the exact value of the kinetic energy is $3\pi^2/2 \approx 14.8044$.

\begin{table}[htb]\footnotesize \centering 
  \begin{tabular}{|c|c|c|c|c|c|c|c||c|c|}
    \hline
    & \# dofs  & $L^1$ & rate    & $L^2$     & rate   & $L^{\infty}$ & rate& $\calK(\theta)$ & $\calH(\theta)$\\
    \hline
    \multirow{5}{*}{\rotatebox{90}{Galerkin \qquad}}
&      100 &   1.58E+00 &   --   &   3.13E-01 &   --   &   1.44E-01 &   --   & 13.5743 & 23.8296 \\
&      400 &   3.77E-01 &   2.06 &   7.52E-02 &   2.06 &   3.35E-02 &   2.10 & 14.4839 & 25.7446 \\
&     1600 &   9.39E-02 &   2.01 &   1.87E-02 &   2.01 &   8.23E-03 &   2.02 & 14.7235 & 26.2526 \\
&     6400 &   2.35E-02 &   2.00 &   4.68E-03 &   2.00 &   2.06E-03 &   2.00 & 14.7841 & 26.3816 \\
&    25600 &   5.87E-03 &   2.00 &   1.17E-03 &   2.00 &   5.14E-04 &   2.00 & 14.7993 & 26.4139 \\
&   102400 &   1.47E-03 &   2.00 &   2.93E-04 &   2.00 &   1.29E-04 &   2.00 & 14.8031 & 26.4220 \\
&   409600 &   3.67E-04 &   2.00 &   7.31E-05 &   2.00 &   3.21E-05 &   2.00 & 14.8041 & 26.4241 \\
    \hline
    \multirow{5}{*}{\rotatebox{90}{EV \qquad}}
&      100 &   9.26E+00 &   --   &   1.78E+00 &   --   &   6.04E-01 &   --   & 7.43147 & 17.7222 \\
&      400 &   2.46E+00 &   1.91 &   5.36E-01 &   1.73 &   2.39E-01 &   1.34 & 12.7493 & 24.1963 \\
&     1600 &   6.89E-01 &   1.84 &   1.64E-01 &   1.71 &   9.89E-02 &   1.27 & 14.3682 & 25.9460 \\
&     6400 &   1.77E-01 &   1.96 &   4.60E-02 &   1.83 &   3.94E-02 &   1.33 & 14.7046 & 26.3142 \\
&    25600 &   4.52E-02 &   1.97 &   1.26E-02 &   1.87 &   1.59E-02 &   1.31 & 14.7803 & 26.3980 \\
&   102400 &   1.16E-02 &   1.96 &   3.40E-03 &   1.89 &   6.34E-03 &   1.32 & 14.7985 & 26.4181 \\
&   409600 &   2.97E-03 &   1.96 &   9.10E-04 &   1.90 &   2.53E-03 &   1.32 & 14.8029 & 26.4231 \\
    \hline
    \hline
    \multirow{5}{*}{\rotatebox{90}{FCT+EV \quad}}
&      100 &   9.31E+00 &   --   &   1.79E+00 &   --   &   6.07E-01 &   --   & 7.41079 & 17.6890 \\ 
&      400 &   2.50E+00 &   1.90 &   5.40E-01 &   1.73 &   2.42E-01 &   1.33 & 12.7460 & 24.1918 \\
&     1600 &   6.98E-01 &   1.84 &   1.65E-01 &   1.72 &   9.94E-02 &   1.28 & 14.3681 & 25.9458 \\
&     6400 &   1.80E-01 &   1.96 &   4.63E-02 &   1.83 &   3.95E-02 &   1.33 & 14.7046 & 26.3142 \\
&    25600 &   4.59E-02 &   1.97 &   1.27E-02 &   1.87 &   1.58E-02 &   1.32 & 14.7803 & 26.3980 \\
&   102400 &   1.18E-02 &   1.96 &   3.42E-03 &   1.89 &   6.37E-03 &   1.31 & 14.7985 & 26.4181 \\
&   409600 &   3.10E-03 &   1.92 &   9.30E-04 &   1.88 &   2.55E-03 &   1.32 & 14.8029 & 26.4231 \\
    \hline
    \hline
    \multirow{5}{*}{\rotatebox{90}{EV+ $\widetilde{\eta_2^n}$\quad }}
&      100 &   8.68E+00 &   --   &   1.70E+00 &   --   &   5.61E-01 &   --   & 7.94752 & 14.2740 \\
&      400 &   1.73E+00 &   2.33 &   3.75E-01 &   2.18 &   1.54E-01 &   1.86 & 13.4028 & 23.8887 \\
&     1600 &   3.26E-01 &   2.41 &   6.88E-02 &   2.45 &   3.10E-02 &   2.31 & 14.6079 & 26.0524 \\
&     6400 &   4.83E-02 &   2.75 &   1.03E-02 &   2.74 &   5.15E-03 &   2.59 & 14.7708 & 26.3583 \\
&    25600 &   8.05E-03 &   2.59 &   1.73E-03 &   2.57 &   8.82E-04 &   2.54 & 14.7977 & 26.4111 \\
&   102400 &   1.64E-03 &   2.30 &   3.47E-04 &   2.32 &   1.68E-04 &   2.39 & 14.8029 & 26.4217 \\
&   409600 &   3.82E-04 &   2.10 &   7.86E-05 &   2.14 &   3.62E-05 &   2.21 & 14.8043 & 26.4246 \\
\hline
    \hline
    \multirow{5}{*}{\rotatebox{90}{FCT+EV+$\widetilde{\eta_2^n} $ \quad}}
&      100 &   8.81E+00 &   --   &   1.73E+00 &   --   &   5.67E-01 &   --   & 7.96202 & 14.3007 \\
&      400 &   1.92E+00 &   2.20 &   3.95E-01 &   2.13 &   1.52E-01 &   1.90 & 13.4111 & 23.9015 \\
&     1600 &   3.76E-01 &   2.35 &   7.70E-02 &   2.36 &   3.13E-02 &   2.28 & 14.6085 & 26.0534 \\
&     6400 &   6.94E-02 &   2.44 &   1.34E-02 &   2.52 &   5.42E-03 &   2.53 & 14.7709 & 26.3585 \\
&    25600 &   1.51E-02 &   2.20 &   2.78E-03 &   2.27 &   1.03E-03 &   2.39 & 14.7977 & 26.4111 \\
&   102400 &   3.52E-03 &   2.10 &   6.47E-04 &   2.10 &   2.33E-04 &   2.15 & 14.8029 & 26.4217 \\
&   409600 &   1.05E-03 &   1.75 &   2.06E-04 &   1.65 &   1.96E-04 &   0.25 & 14.8043 & 26.4246 \\
    \hline
 \end{tabular}
   \caption{Convergence tests on smooth convection problem with $\text{CFL}=0.2$. Comparison between the Galerkin, Entropy Viscosity (EV), EV with FCT, EV with normalization \eqref{eq:normalization2} and EV with FCT and normalization \eqref{eq:normalization2} schemes. Errors at time $t=2\pi$ in the $L^1(\mathbb T^2)$, $L^2(\mathbb T^2)$, and $L^\infty(\mathbb T^2)$ norms are reported for different spacial resolutions along with the corresponding convergence rates. 
   The last two columns along reports the Kinetic energy and Helicity. The reference values for the kinetic energy and helicity are $\calK(\theta)=14.8041$ and  
  $\calH(\theta) = 26.4241$.}
  \label{table:2Dconvection}
\end{table}

\subsection{Smooth fractional diffusion problem}\label{sec:smooth_frac}

In this section we approximate a purely fractional diffusion problem
$$
\p_t \theta + \frac{1}{1000} (-\Delta)^{\frac 1 4} \theta = 0, \qquad \textrm{in }  {\mathbb T^2}\times (0,\pi),
$$
supplemented with the initial condition $\theta(x_1,x_2, 0) = e^{-2^{\frac 1 4} \varkappa t} \sin x_2 \cos x_1$.
The latter is a scaled eigenfunction of the Laplacian. 
Hence, in view of the definition \eqref{ss:FracLap}, the exact solution $\theta$ is given by
\[
	\theta(x_1,x_2,t) = e^{-\frac t {1000} 2^{\frac 1 4}} \sin x_2 \cos x_1.
\]

In Table~\ref{table:diffusion}, we report the errors of the Galerkin method \eqref{eq:sqg:G} (with $\bU_k^n \equiv 0$) evaluated at time $t=\pi$ in the $L^1(\mathbb T^2)$-, $L^2(\mathbb T^2)$- and $L^\infty(\mathbb T^2)$-norms on a sequence of uniformly refined subdivisions. Two different choice of the sinc quadrature parameters are investigated, see \eqref{e:apprx_action}.
The time step is set to be $\Delta t = 0.1 h$
Second order convergence rates are observed in all norm when the sinc quadrature parameters are chosen to be $k=0.8$, $M=12$. However, the rate of convergence in the $L^\infty(\mathbb T^2)$ norm is reduced for the finer set of sinc quadrature parameters. Note that the convergence in the $L^\infty$-norm is not analyzed in \cite{bonito2019numerical}.
From now on, the sinc quadrature parameters are set to $k=0.8$ and $M=12$.

\begin{table}[htb]\centering 
  \begin{tabular}{|c|c|c|c|c|c|c|c|}
    \hline
&    \# dofs  & $L^1$ & rate    & $L^2$     & rate   & $L^{\infty}$ & rate\\
    \hline
    \multirow{5}{*}{\rotatebox{90}{$k\!=\!0.2$, $M\!=\!62 \,\,$}}
&      100 &   1.28E+00 &   --   &   2.51E-01 &   --   &   8.44E-02 &   --   \\ 
&      400 &   3.25E-01 &   1.98 &   6.36E-02 &   1.98 &   2.14E-02 &   1.98 \\
&     1600 &   8.19E-02 &   1.99 &   1.59E-02 &   2.00 &   6.08E-03 &   1.82 \\
&     6400 &   2.07E-02 &   1.99 &   4.04E-03 &   1.98 &   2.01E-03 &   1.59 \\
&    25600 &   5.41E-03 &   1.94 &   1.08E-03 &   1.90 &   9.13E-04 &   1.14 \\
&   102400 &   1.50E-03 &   1.85 &   3.36E-04 &   1.69 &   6.97E-04 &   0.39 \\
    \hline
    \hline
    \multirow{5}{*}{\rotatebox{90}{$k\!=\! 0.8$, $M\!=\!12\,\,$}}
&      100 &   1.26E+00 &    --  &   2.47E-01 &    --  &   8.28E-02 &    --  \\
&      400 &   3.20E-01 &   1.97 &   6.27E-02 &   1.98 &   2.10E-02 &   1.98 \\
&     1600 &   7.97E-02 &   2.01 &   1.55E-02 &   2.01 &   5.43E-03 &   1.95 \\
&     6400 &   1.88E-02 &   2.08 &   3.67E-03 &   2.08 &   1.36E-03 &   2.00 \\
&    25600 &   3.55E-03 &   2.40 &   7.27E-04 &   2.34 &   3.40E-04 &   2.00 \\
&   102400 &   1.01E-03 &   1.81 &   2.00E-04 &   1.86 &   1.21E-04 &   1.49 \\
     \hline
 \end{tabular}
   \caption{Effect of the sinc quadrature parameters on a smooth fractional diffusion problem. 
   When the sinc quadrature parameters are chosen too fine, the rate of convergence of the Galerkin finite element approximation deteriorates in $L^\infty(\mathbb T^2)$ but not in $L^1(\mathbb T^2)$ nor in $L^2(\mathbb T^2)$.}
    \label{table:diffusion}
\end{table}

\subsection{Vortex rotation}\label{s:vortex}

When the geometry of the level set of the buoyancy is simple and does not contain an hyperbolic saddle, then the solution to the SQG system \eqref{eq:sqg_temp}, \eqref{eq:sqg_frac} does not exhibit singularities \cite{constantin1994SQG} even when $\varkappa = 0$ as chosen in this section.
To illustrate this, we follow  \cite{Karniadakis2017SQG} and consider the initial buyancy profile \
\[
	\theta_0(x_1,x_2) = e^{-(x_1-\pi)^2 - 16 (x_2-\pi)^2 },
\]
which develops into a rotating vertex. 

We set $\textrm{CFL}= 0.4$, see \eqref{e:CFL}, and perform the simulations using two different space resolution corresponding to uniform triangulations $\mathcal T_H$ with 351$\times$351 and  $\mathcal T_h$ with 512$\times$512 vertices.
We also investigate the effect of the residual entropy viscosity parameter $c_{\textrm{EV}}$ in \eqref{eq:dijH} chosen to be either $c_{\textrm{EV}}=0.1$ or $c_{\textrm{EV}}=0.5$.

 The buoyancy  at several time snapshots is provided in Figure~\ref{fig:vortex}. 
 The columns of Figure~\ref{fig:vortex} correspond to four simulations: first order solution (first column) on $\mathcal T_H$, FCT solutions with $c_\textrm{EV}=0.5$ on $\mathcal T_H$ (second column), $c_\textrm{EV}=0.1$ on $\mathcal T_H$ (third column), and $c_\textrm{EV}=0.1$ on $\mathcal T_h$ (fourth column). We observe the significant improvement in accuracy of the limiting algorithm when comparing with the first order scheme. 
We also remark that the predictions from all the higher order finite element schemes are comparable to the spectral methods used in \cite{Karniadakis2017SQG}.  
 In particular, we see that the vortex grows thin tails which eventually generate small structures (spinning vortices).
  Furthermore, all simulations exhibit a discrete maximum principle property as predicted by Theorem~\ref{t:FCT}, which does not seem to be the case for the simulations provided in \cite{Karniadakis2017SQG}. 
 
\begin{figure}[hbt!]
     \includegraphics[width=0.23\textwidth]{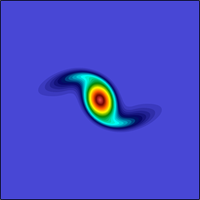}
     \includegraphics[width=0.23\textwidth]{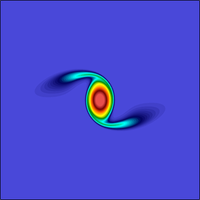}
     \includegraphics[width=0.23\textwidth]{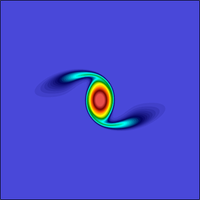}
     \includegraphics[width=0.23\textwidth]{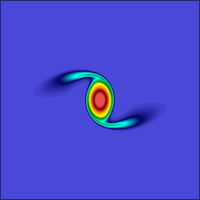}
     \includegraphics[width=0.0348\textwidth]{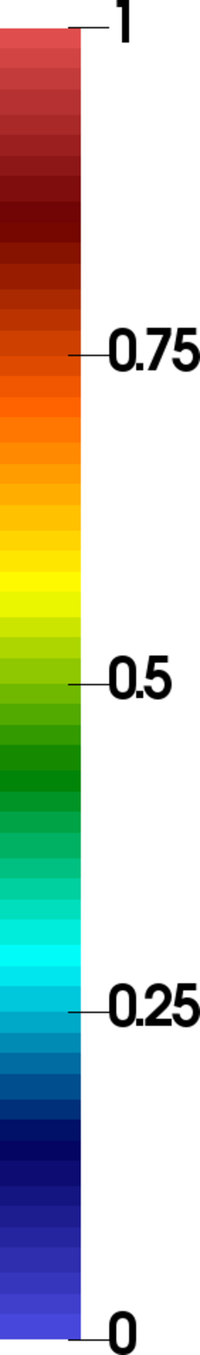}     
      \\
     \includegraphics[width=0.23\textwidth]{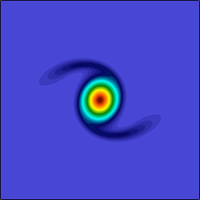}
     \includegraphics[width=0.23\textwidth]{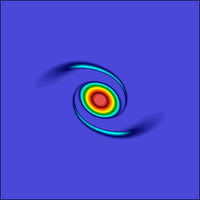}
     \includegraphics[width=0.23\textwidth]{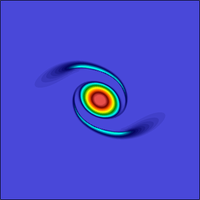}
     \includegraphics[width=0.23\textwidth]{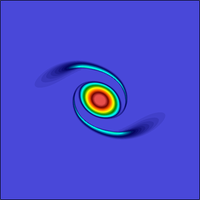}
      \\
     \includegraphics[width=0.23\textwidth]{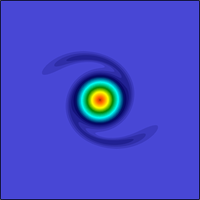}
     \includegraphics[width=0.23\textwidth]{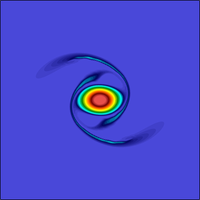}
     \includegraphics[width=0.23\textwidth]{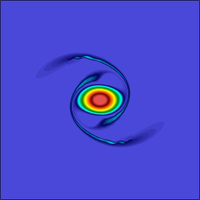}
     \includegraphics[width=0.23\textwidth]{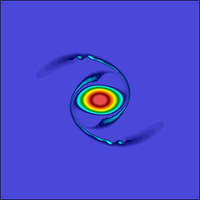}
      \\
     \includegraphics[width=0.23\textwidth]{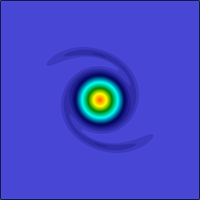}
     \includegraphics[width=0.23\textwidth]{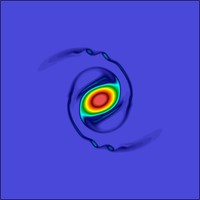}
     \includegraphics[width=0.23\textwidth]{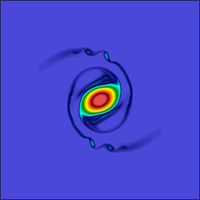}
     \includegraphics[width=0.23\textwidth]{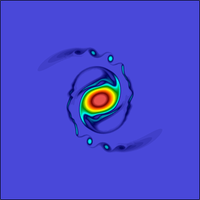}
      \\
     \includegraphics[width=0.23\textwidth]{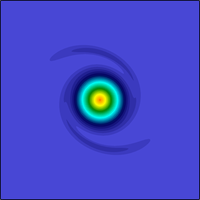}
     \includegraphics[width=0.23\textwidth]{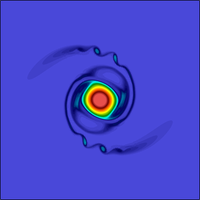}
     \includegraphics[width=0.23\textwidth]{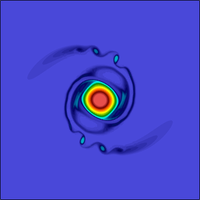}
     \includegraphics[width=0.23\textwidth]{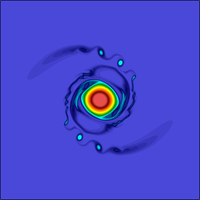}
      \\
  \caption{
  Single vortex rotation at (by rows) $t=8$, $t=16$,
  $t=26$, $t=35$ and $t=40$; the columns correspond to first order scheme on $\mathcal T_H$ (first), 
  and FCT solution with $c_\textrm{EV}=0.5$ on $\mathcal T_H$ (second), $c_\textrm{EV}=0.1$ on $\mathcal T_H$ (third), and $c_\textrm{EV}=0.1$ on the finer mesh $\mathcal T_h$ (fourth). 
  All simulations satisfy a discrete maximum principle property.
  } 
  \label{fig:vortex}
\end{figure}

In Figure~\ref{fig:vortex:plots}, we report the evolution of the  kinetic energy and  helicity \eqref{eq:kinetic_helicity}, which are conserved at the continuous level. These quantities are not conserved by the FCT scheme due to the presence of the artificial viscosity.
We also report in Figure~\ref{fig:vortex-double:plots} (left) the evolution of $\|\nabla \Theta_k(t)\|_{L^\infty{({\mathbb T^2})}}$  to monitor apparition of singularities.
The norm of the gradient of the solution oscillates when long vortex filaments develop and eventually break down to a small scale new vortices (between $t=10$ and $t=40$). 

\begin{figure}[hbt!]
  \centering{
      \includegraphics[width=0.49\textwidth]{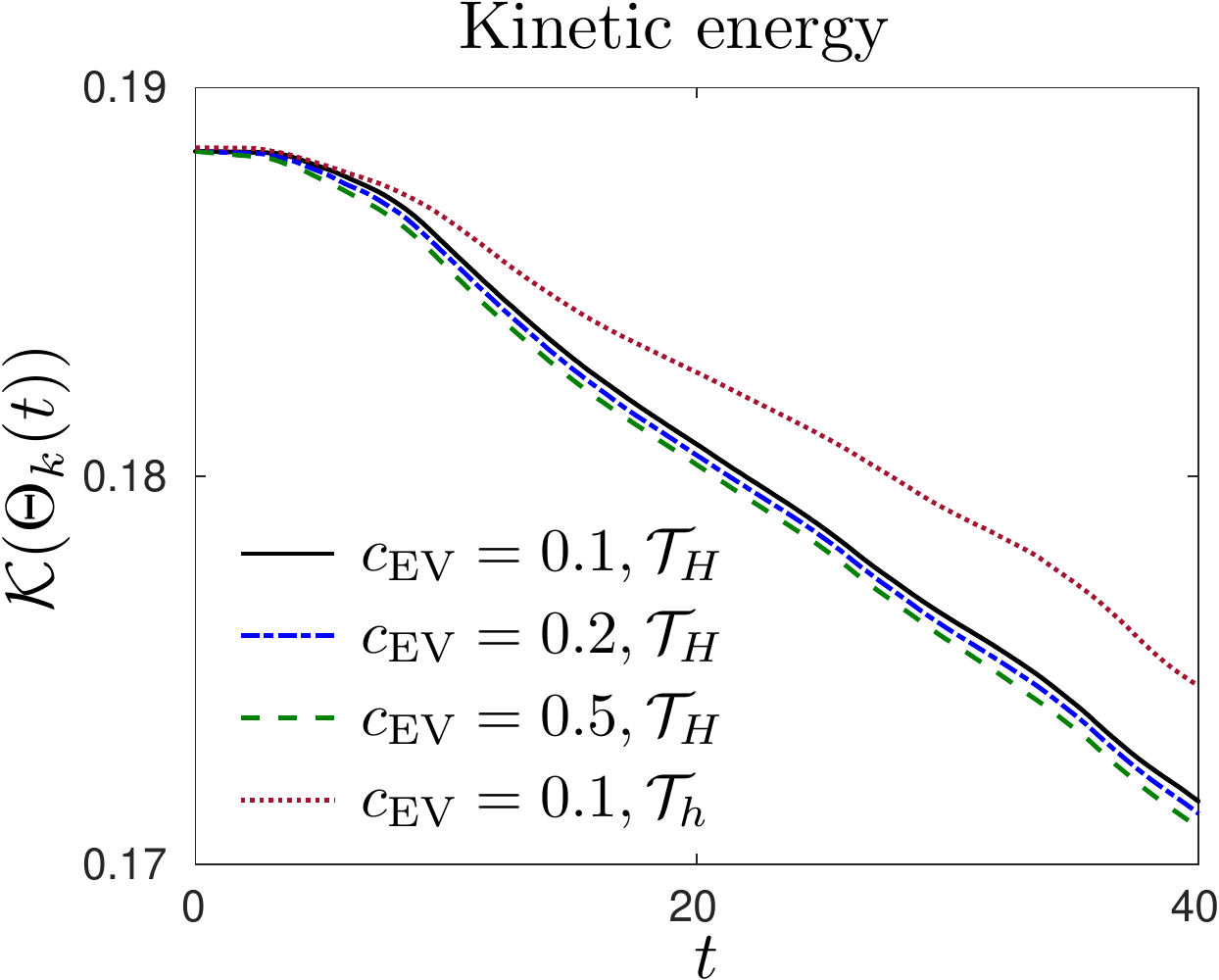}
      \includegraphics[width=0.49\textwidth]{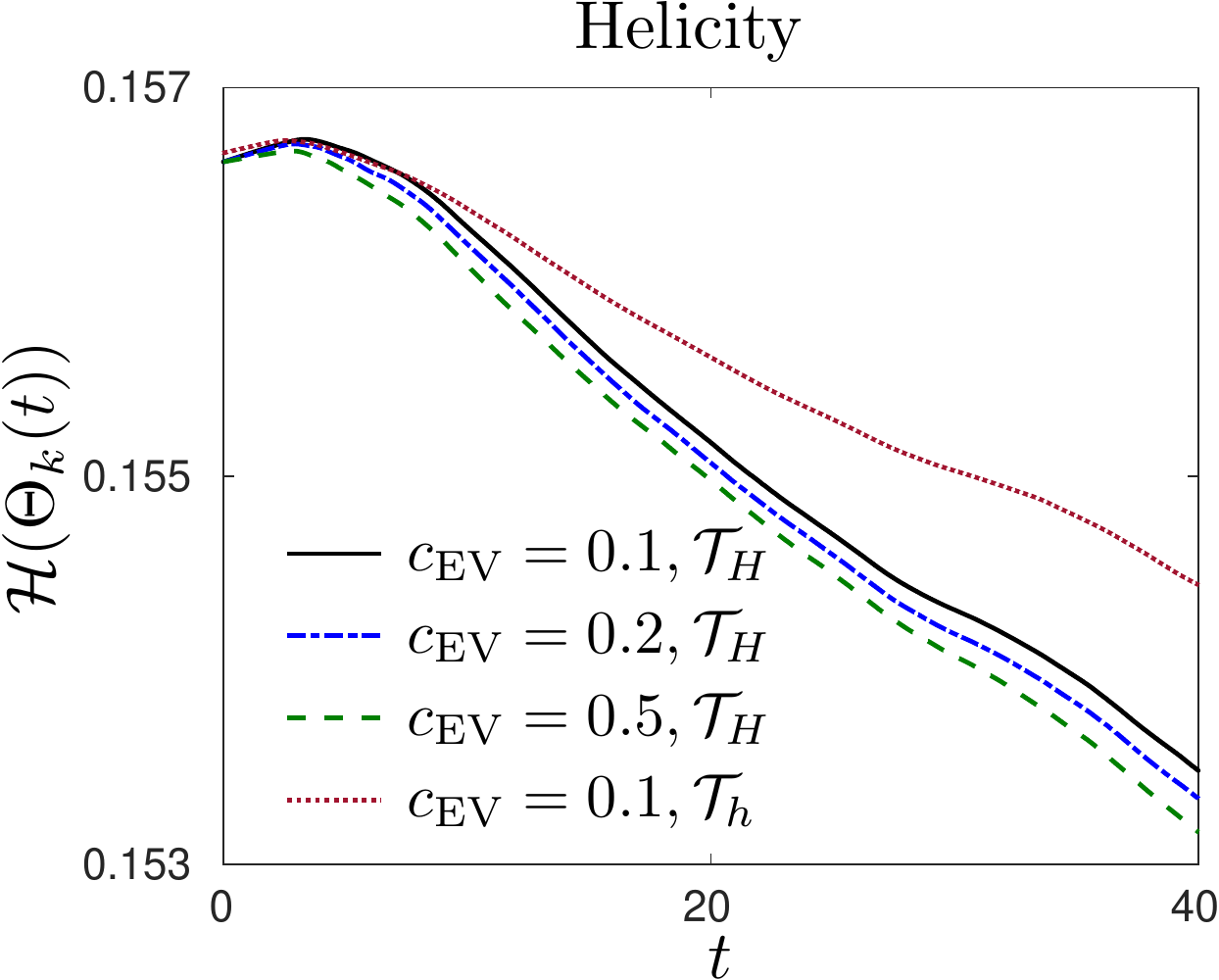}
  }
  \caption{Single vortex rotation: evolution of the kinetic energy and helicity for different nonlinear viscosity parameters $c_{\textrm{EV}}$ and two different triangulations $\mathcal T_H$ (351$\times$351 vertices) and $\mathcal T_h$ (512$\times$512 vertices - indicated ``fine'' in the caption).}
  \label{fig:vortex:plots}
\end{figure}

\begin{figure}[hbt!]
  \centering{
      \includegraphics[width=0.49\textwidth]{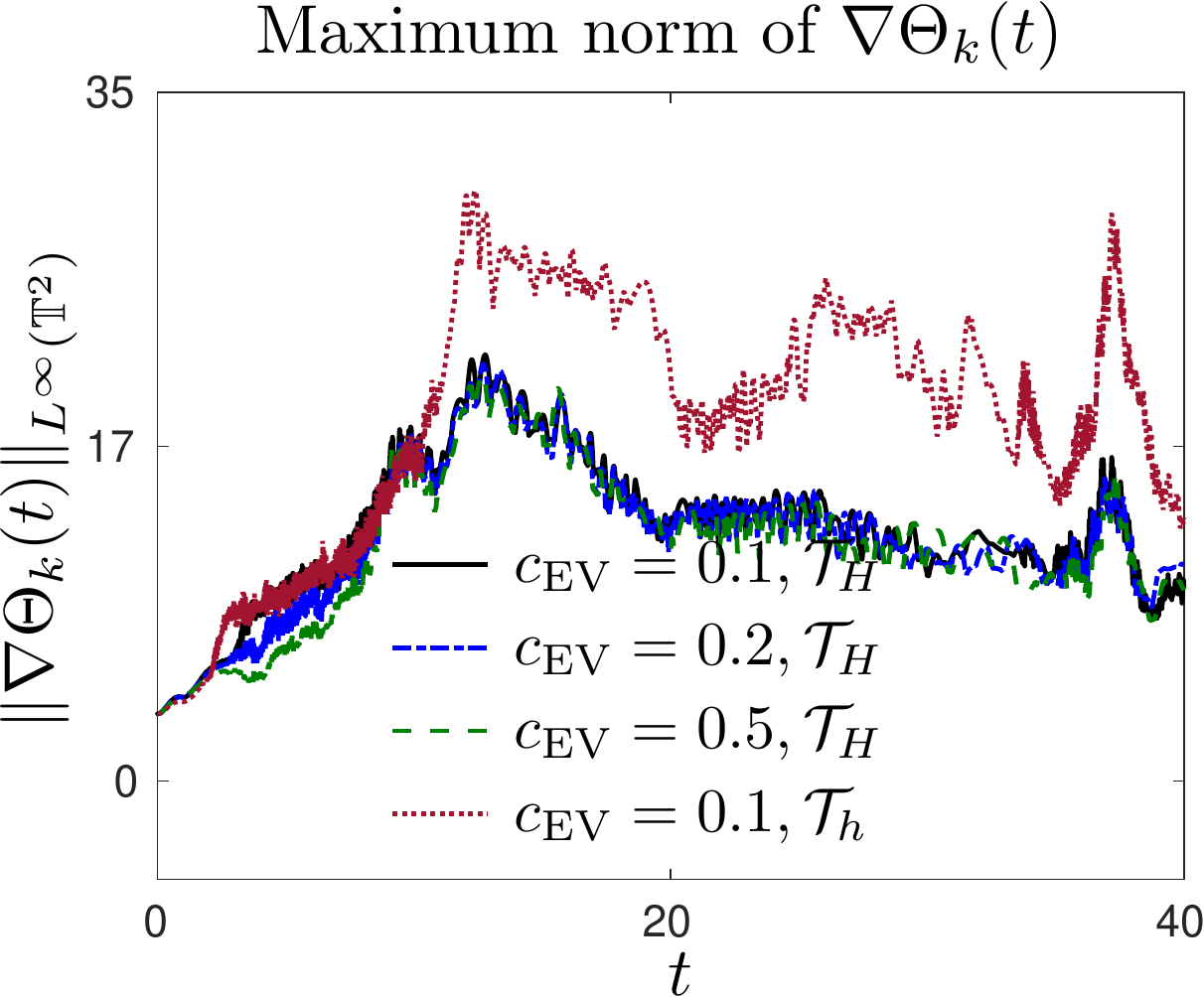}
      \includegraphics[width=0.49\textwidth]{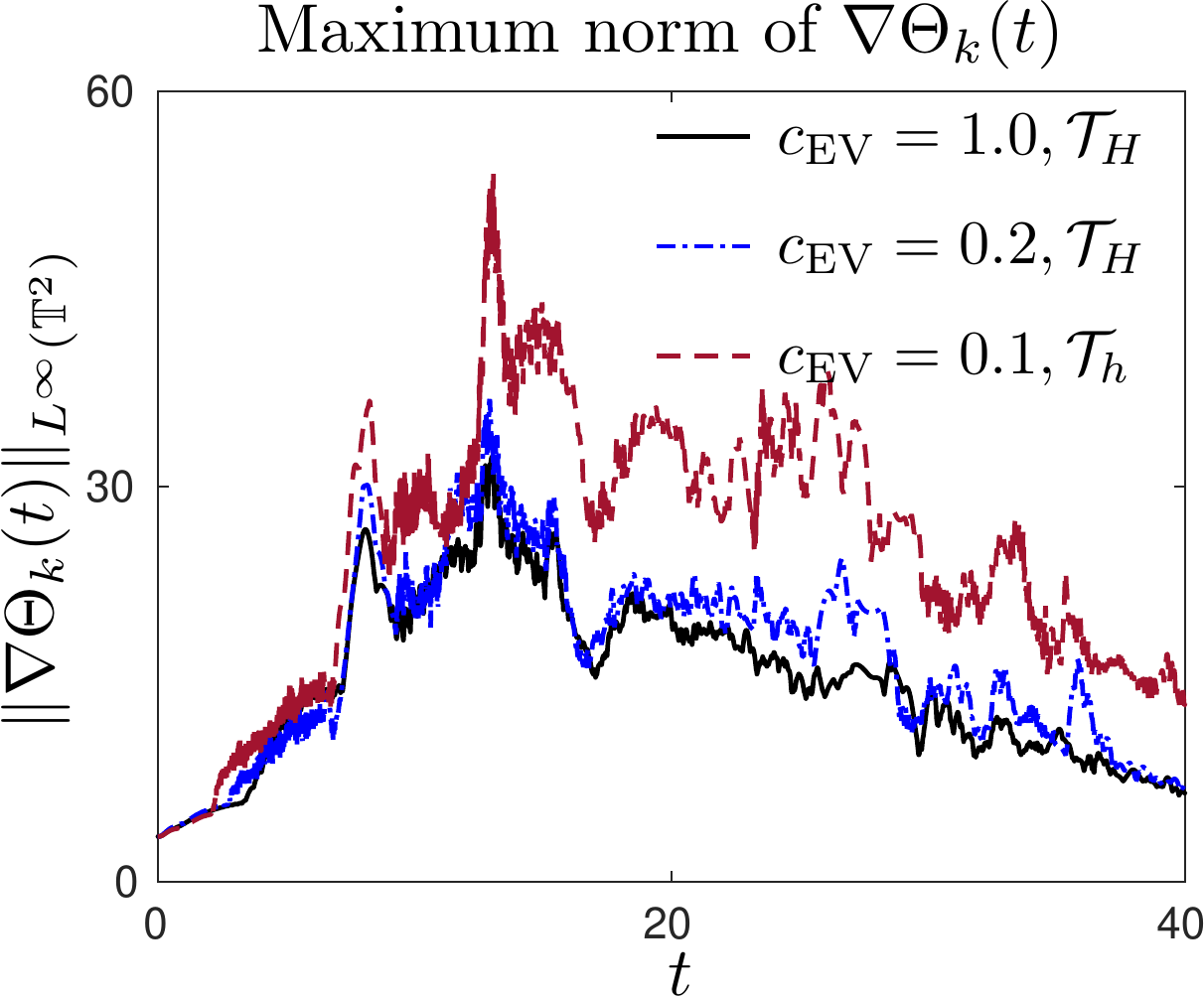}
  }
  \caption{Evolution of $\|\nabla \Theta_k(t)\|_{L^\infty{({\mathbb T^2})}}$ for the (left) single and (right) double vortex rotation for the different nonlinear viscosity parameters $c_{\textrm{EV}}$ and two different triangulations $\mathcal T_H$ (351$\times$351 vertices) and $\mathcal T_h$ (512$\times$512 vertices).}
  \label{fig:vortex-double:plots}
\end{figure}

We now investigate the interaction between two rotating vortices and consider the following initial temperature

\[
	\theta_0(x_1,x_2) = e^{-16(x_1-\pi-\frac12)^2 - (x_2-\pi)^2 } +
	e^{-16(x_1-\pi+\frac12)^2 - (x_2-\pi)^2 }.
\]
Again, we stop the simulation at $T=40$.
As for the single vertex simulation, we select the time step with $\textrm{CFL}= 0.4$ on the coarse mesh $\mathcal T_h$.
The entropy viscosity parameter $c_{\textrm{EV}}$ is set to 0.1.

Snapshots of the buoyancy $\Theta_k^n$ are provided in Figure~\ref{fig:double_vortex}.
We observe the development of a sharp layer between the two vortices around $t=8$. Then this sharp layer reduces over time but the tip of two vortices do not merge. 
We also note the presence of other small scale vortices that develop in time confirming the ability of the method to produce fine details without over-resolving.
Notice that the above mentioned sharp layer does not appear to be a singularity in view of the evolution of $\| \nabla \Theta_k(t) \|_{L^\infty(\mathbb T^2)}$ provided in Figure~\ref{fig:vortex-double:plots} (right). 

\begin{figure}[hbt!]
  \centering{
    \begin{subfigure}[t]{0.31\textwidth}
      \includegraphics[width=\textwidth]{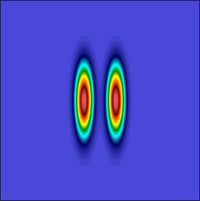}
      \caption{$t=0$}
    \end{subfigure}
    \begin{subfigure}[t]{0.31\textwidth}
      \includegraphics[width=\textwidth]{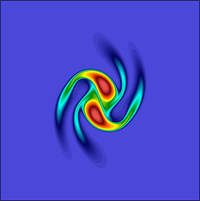}
      \caption{$t=8$}
    \end{subfigure}
    \begin{subfigure}[t]{0.31\textwidth}
      \includegraphics[width=\textwidth]{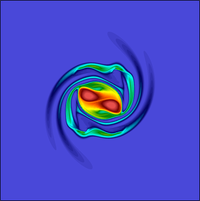}
      \caption{$t=16$}
    \end{subfigure}
    \begin{subfigure}[t]{0.038\textwidth}
      \includegraphics[width=\textwidth]{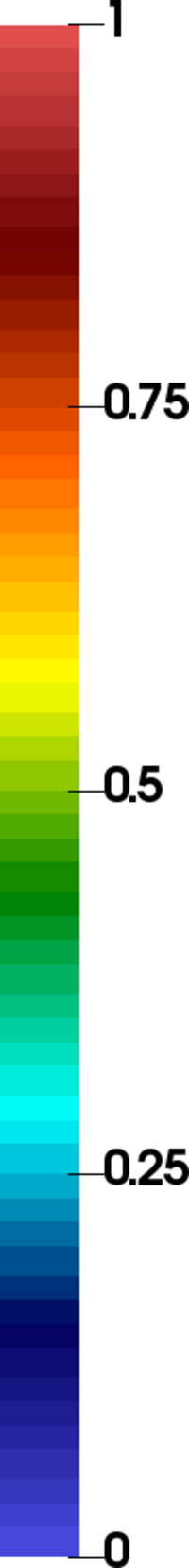}     
    \end{subfigure}
  }
  \centering{
    \begin{subfigure}[t]{0.31\textwidth}
      \includegraphics[width=\textwidth]{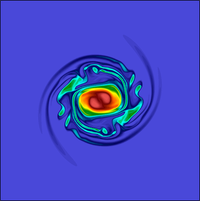}
      \caption{$t=26$}
    \end{subfigure}
    \begin{subfigure}[t]{0.31\textwidth}
      \includegraphics[width=\textwidth]{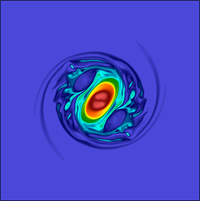}
      \caption{$t=35$}
    \end{subfigure}
    \begin{subfigure}[t]{0.31\textwidth}
      \includegraphics[width=\textwidth]{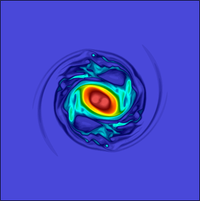}
      \caption{$t=40$}
    \end{subfigure}
    \hspace{0.175in}
  }
  \caption{Snapshots of the double vortex rotation on a $351\times 351$ space resolution and with $\textrm{CFL}=0.4$. 
  A sharp layer develop between the two vortices around $t=8$. Although the intensity of the layer separating the two vortices is reducing over  time, the vortices do not merge.}
  \label{fig:double_vortex}
\end{figure}

\subsection{Viscous SQG with Sharp Transitions}\label{s:viscous_transition}

In this section, we turn our attention to the case $\varkappa>0$ and reproduce the benchmark configuration previously investigated in \cite{constantin1994SQG, ohkitani1997SQG} and \cite{constantin2012SQGsimu, Karniadakis2017SQG}. 
We set $\varkappa$ are interested in approximating the solution to the viscous SQG system with $\varkappa =0.001$. 
Recall that the Ekman pumping number $\varkappa$ is typically small in our physical setting. 
We have already mentioned that singularities can develop only when a saddle structure is present in the initial buoyancy
as for
\[
\theta_0(x_1,x_2) = \sin x_1 \sin x_2 + \cos x_2.
\]

The space discretization consists of 351$\times$351 vertices and we chose a time step so that $\textrm{CFL}= 0.25$.
In addition, the viscosity coefficient $c_{\text{EV}}$ is taken to be  1. 

The evolution of the potential temperature is depicted in Figure~\ref{fig:viscous}. 
The initial data contains two smooth waves that evolve in time and at $t\approx7$ a sharp transition appears between them. A similar sharp transition develops further in other parts of the computational domain.
In addition, we also report in Figure~\ref{fig:viscous:plots} the evolution of $\|\nabla \Theta_k(t)\|_{L^\infty{({\mathbb T^2})}}$.  
As in the above mentioned previous works, we observe that the latter grows when sharp transitions appear and then oscillates.

\begin{figure}[hbt!]
  \centering{
    \begin{subfigure}[t]{0.31\textwidth}
      \includegraphics[width=\textwidth]{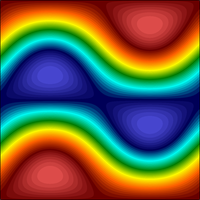}
      \caption{$t=0$}
    \end{subfigure}
    \begin{subfigure}[t]{0.31\textwidth}
      \includegraphics[width=\textwidth]{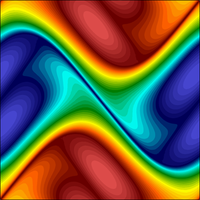}
      \caption{$t=6$}
    \end{subfigure}
    \begin{subfigure}[t]{0.31\textwidth}
      \includegraphics[width=\textwidth]{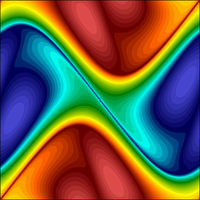}
      \caption{$t=7$}
    \end{subfigure}
    \begin{subfigure}[t]{0.0405\textwidth}
      \includegraphics[width=\textwidth]{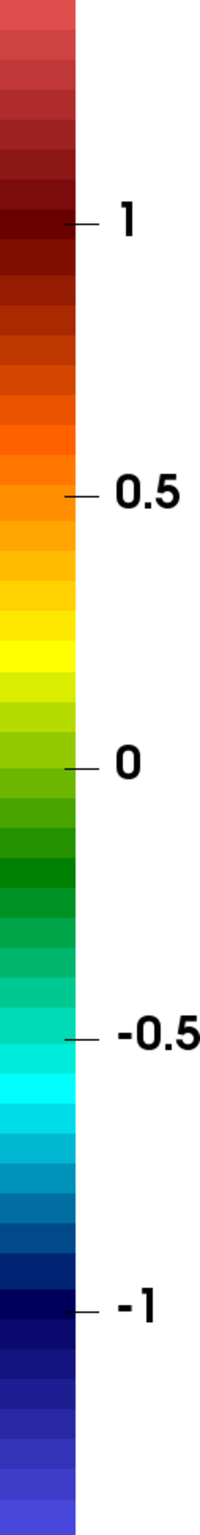}
    \end{subfigure}
  }
  \\
  \centering{
    \begin{subfigure}[t]{0.31\textwidth}
      \includegraphics[width=\textwidth]{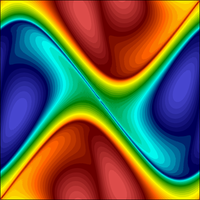}
      \caption{$t=7.5$}
    \end{subfigure}
    \begin{subfigure}[t]{0.31\textwidth}
      \includegraphics[width=\textwidth]{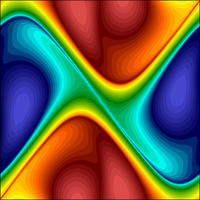}
      \caption{$t=8$}
    \end{subfigure}
    \begin{subfigure}[t]{0.31\textwidth}
      \includegraphics[width=\textwidth]{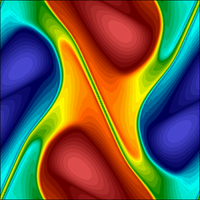}
      \caption{$t=12$}
    \end{subfigure}
    \hspace{0.175in}
  }
  \\
  \centering{
    \begin{subfigure}[t]{0.31\textwidth}
      \includegraphics[width=\textwidth]{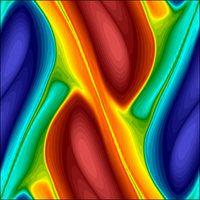}
      \caption{$t=14$}
    \end{subfigure}
    \begin{subfigure}[t]{0.31\textwidth}
      \includegraphics[width=\textwidth]{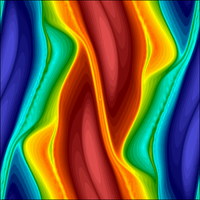}
      \caption{$t=16$}
    \end{subfigure}
    \begin{subfigure}[t]{0.31\textwidth}
      \includegraphics[width=\textwidth]{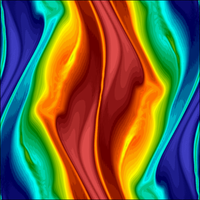}
      \caption{$t=20$}
    \end{subfigure}
    \hspace{0.171in}
  }
  \caption{Snapshots of the potential temperature for the viscous SQG. The mesh consists of $351\!\!\times \!\!351$ vertices, $c_{\text{EV}}=1$ and $\textrm{CFL}= 0.25$. Sharp layers are developing out of the saddle configuration present in the initial data.}
 \label{fig:viscous}
\end{figure}

\begin{figure}[hbt!]
  \centering{
      \includegraphics[width=0.49\textwidth]{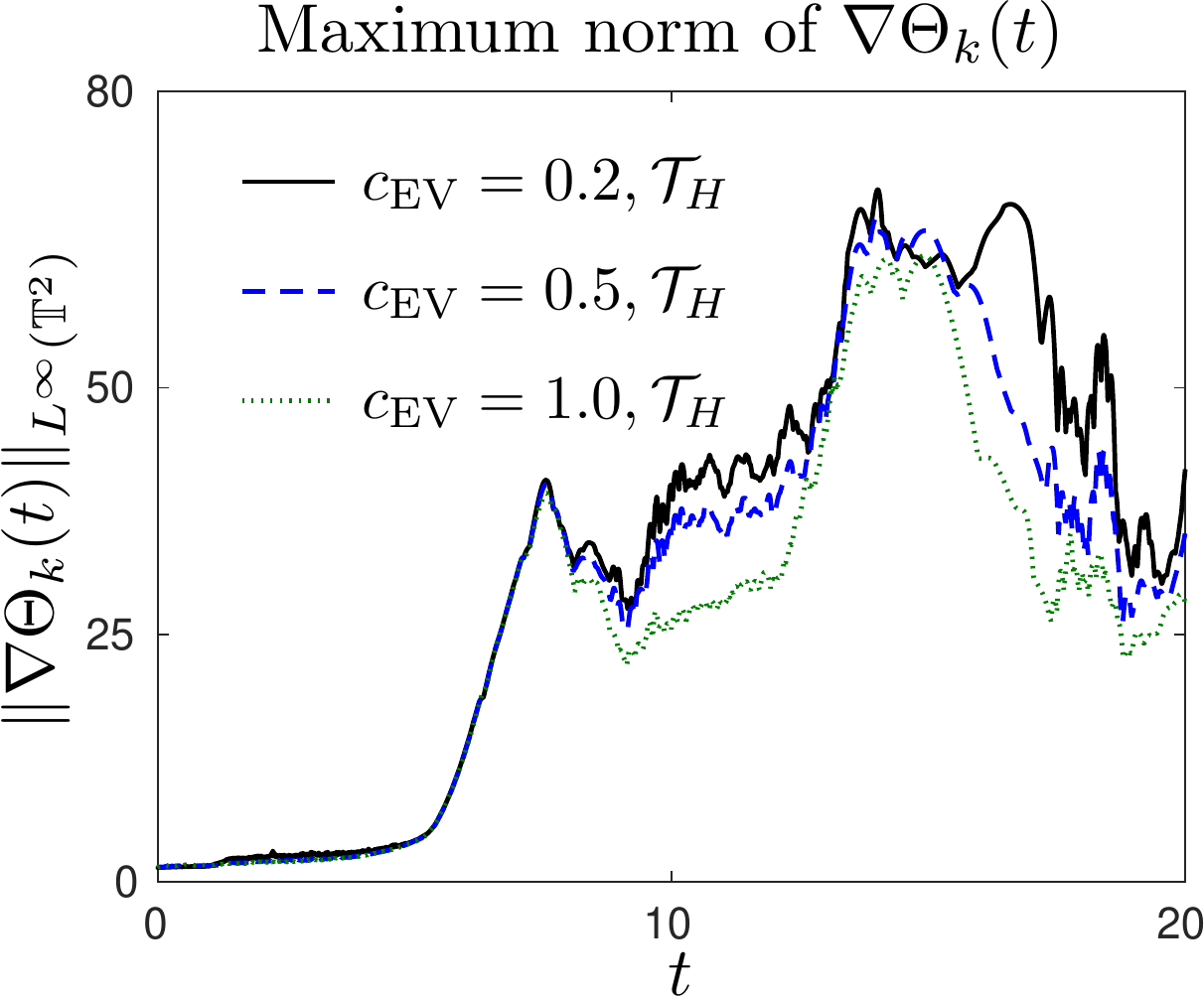}
  }
  \caption{Viscous SQG with sharp layer: Evolution of $\|\nabla \Theta_k(t)\|_{L^\infty{({\mathbb T^2})}}$.  }
  \label{fig:viscous:plots}
\end{figure}

\subsection{Freely Decaying Turbulence and Kolmogorov Energy Cascade}
\label{sec:2d_turbulence}

Experimentally  \cite{Tabeling_et_al_1991} and numerically  \cite{mcwilliams_1984, mcwilliams_1990} it is observed that starting from a noisy initial data to represent incoherent vortices, vortices  appear and merge with other vortices with the same rotation direction to form bigger vortices. This process continuous until only two vertices with opposing rotating velocities are left and decay diffusively.

 We perform a simulation up to time $T=80$ of freely decaying turbulence for the SQG system using a triangulation of the domain $\mathbb T^2$ with vertices of coordinates $(2\pi n/512, 2\pi m/512)$, $n,m=0,...,512$.  
The time step is chosen so that the CFL number is 0.4 and $c_{\textrm{EV}}=0.1$.
Incoherent vortices are represented here by an initial buoyancy whose value at each vertex coordinates is randomly chosen from a uniform partition over $[-10, 10]$, see Figure~\ref{fig:2dturb:sol1} (a).
As expected, vortices emerge and merge with other vortices with the same rotation direction to form bigger vortices as observed in Figure~\ref{fig:2dturb:sol1}. In order to make all small scale vortices visible, we plot the  solution  in Figure~\ref{fig:2dturb:sol1} in Schlieren gray-scale diagram:
\[
  \sigma = \exp\Bigg(
  -10 \frac{|\GRAD \Theta_k^n| - \min_{\polT^2}|\GRAD \Theta_k^n|}{\max_{\polT^2}|\GRAD \Theta_k^n| - \min_{\polT^2}|\GRAD \Theta_k^n|}
  \Bigg).
\]

\begin{figure}[hbt!]
  \centering{
    \begin{subfigure}[t]{0.32\textwidth}
 	   \includegraphics[width=\textwidth]{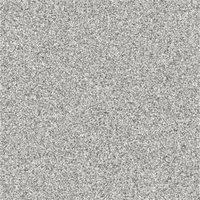}
      \caption{$t=0$}
    \end{subfigure}
    \begin{subfigure}[t]{0.32\textwidth}
 	   \includegraphics[width=\textwidth]{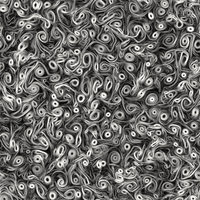}
      \caption{$t=5$}
    \end{subfigure}
    \begin{subfigure}[t]{0.32\textwidth}
 	   \includegraphics[width=\textwidth]{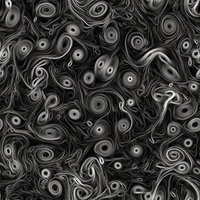}
      \caption{$t=20$}
    \end{subfigure}

    \begin{subfigure}[t]{0.32\textwidth}
 	   \includegraphics[width=\textwidth]{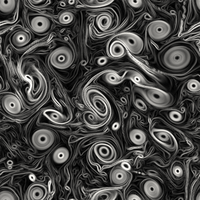}
      \caption{$t=40$}
    \end{subfigure}
    \begin{subfigure}[t]{0.32\textwidth}
 	   \includegraphics[width=\textwidth]{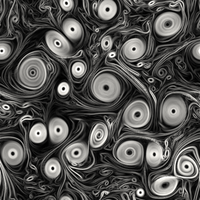}
      \caption{$t=60$}
    \end{subfigure}
    \begin{subfigure}[t]{0.32\textwidth}
 	   \includegraphics[width=\textwidth]{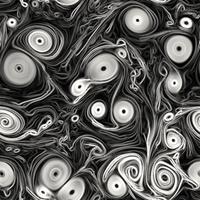}
      \caption{$t=80$}
    \end{subfigure}
    }
    \\
  \caption{Freely decaying turbulence: Schlieren diagram of the potential temperature starting from a white noise initial data.}
  \label{fig:2dturb:sol1}
\end{figure}

Kolmogorov energy cascade describes the energy transfer from larger scale vortices to the smaller ones. 
For isotropic flows like in this setting, it suffices to consider the kinetic energy $\calK(t)$ in \eqref{eq:kinetic_helicity} and determine the energy distribution $\widehat{R}(k,t)$ for $k=1,2,...$ so that
$$
\calK(t) = \sum_{k=0}^\infty \widehat{R}(k,t)dk.
$$

We briefly describe the process and refer to \cite{Pope_2000} for additional details and the Kolmogorov assumptions.
We denote by $R(y,t)$, $y \in \mathbb R$, $t\geq 0$,  the two point correlation function in the first variable
$$
R(y,t) := \frac1{2}\int_{{\mathbb T^2}} \theta( (x_1,x_2) ,t)\theta( (x_1+y,x_2),t)dx_1 dx_2.
$$
Note that because the fluid is assumed to be isotropic,  it suffices to compute the correlation with respect to one variable (the first here) and that
$$
\calK(t) = R(0,t).
$$
The wavelength decomposition readily follows from the Fourier series of $R(y,t)$:
\begin{equation}\label{e:Fourier_exp}
R(y,t)= \sum_{n=-\infty}^\infty  \widehat R_{n}(t)e^{-i ny}
\end{equation}
or, regrouping the terms
$$
R(y,t)= \sum_{k=0}^\infty \sum_{|n|=k}  \widehat R_{n}(t)e^{-i ny}.
$$
Whence, we obtain the desired expression
$$
\calK(t) =  \sum_{k=0}^\infty \widehat R(k,t) \qquad \textrm{with}\qquad 
 \widehat R(k,t):=\sum_{|n|=k}  \widehat R_{n}(t).
$$

In practice, we use discrete Fourier Transform with $N=512$ terms to approximate the Fourier expansion \eqref{e:Fourier_exp}
$$
\widetilde R_m(t) =  \frac{1}{N}\sum_{n=0}^{N-1} R(2\pi m/N,t)e^{-\frac{2\pi i }{N}m},
$$
with $m=0,...,N-1$ so that
$$
\calK(t) \approx  \frac 1 N \sum_{m=0}^{N-1}  | \widetilde R_m(t)|.
$$

We present the energy cascade $m \mapsto |\widetilde R_m(t)|$  for several times in the fully inviscid SQG case in Figure~\ref{fig:2D:E}. 
At large scales, the theoretical prediction of the energy decay $-\frac53$ as in full three-dimensional turbulence was obtained by \cite{held1995surface,Pierrehumbert_et_al_1994}. However inverse cascade of energy typical of two dimensional flows are predicted at small scales thereby leading to an energy decay of $-3$ \cite{Tulloch14690,lapeyre2017surface,Ragone_Badin_2016,capet2008surface}. The left panel of Figure~\ref{fig:2D:E} (left) depicts the energy decay for the SQG simulation. For smaller wave numbers we observe the theoretical $-\frac53$ rate, however, for bigger wavenumbers, the slope becomes steeper. 
We note that similar decays are observed in the viscous case.
The energy decay when $\varkappa=0.001$ and $s=\frac 1 2$ is reported in Figure~\ref{fig:2D:E} (middle).

For comparison, we mention that in the quasi-geostrophic (QG) system, the stream function is computed via the relation
\begin{equation}\label{e:QG}
(-\Delta) \psi = \theta
\end{equation}
instead of \eqref{eq:sqg_frac}.
This system is widely used to mostly study 2D turbulence, see for instance \cite{mcwilliams_1984, mcwilliams_1990, Carnevale_et_al_1991} and references therein.
Our numerical experiments reproduce the expected decay of approximately $-5$ predicted in \cite{mcwilliams_1984, mcwilliams_1990}, see Figure~\ref{fig:2D:E} (right).

\begin{figure}[hbt!]
\centering{
  \includegraphics[width=0.32\textwidth]{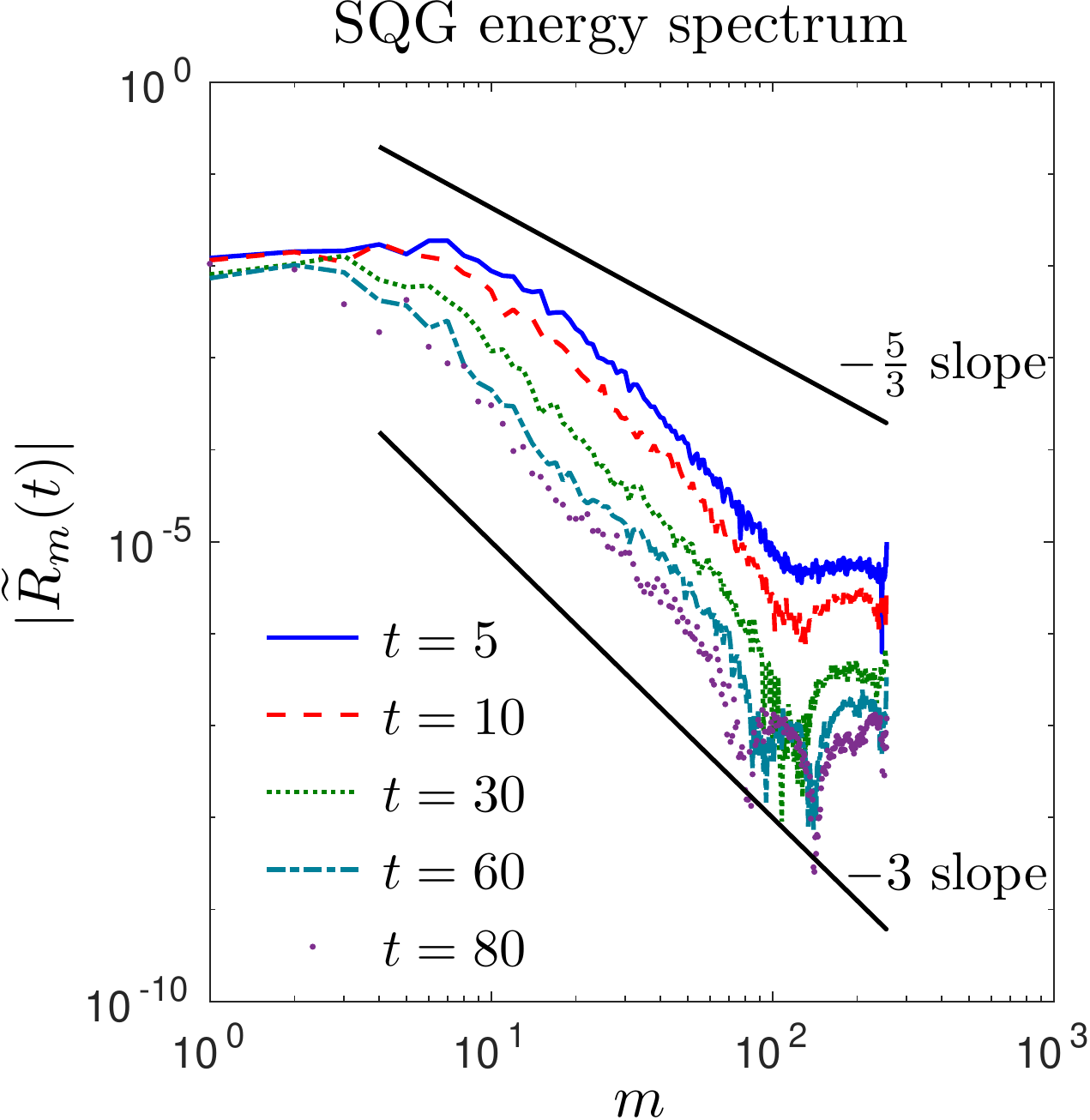}
  \includegraphics[width=0.32\textwidth]{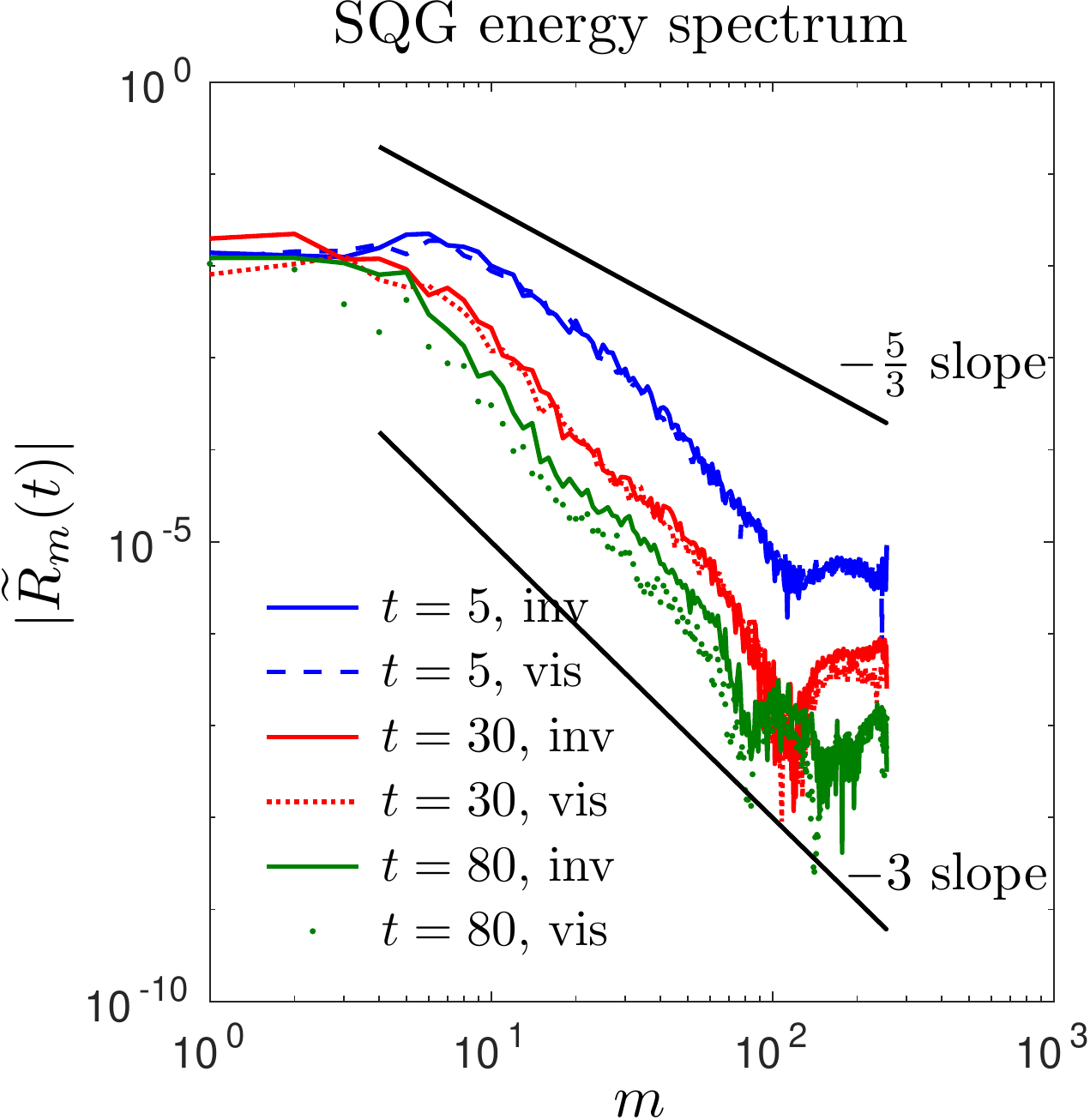}
  \includegraphics[width=0.32\textwidth]{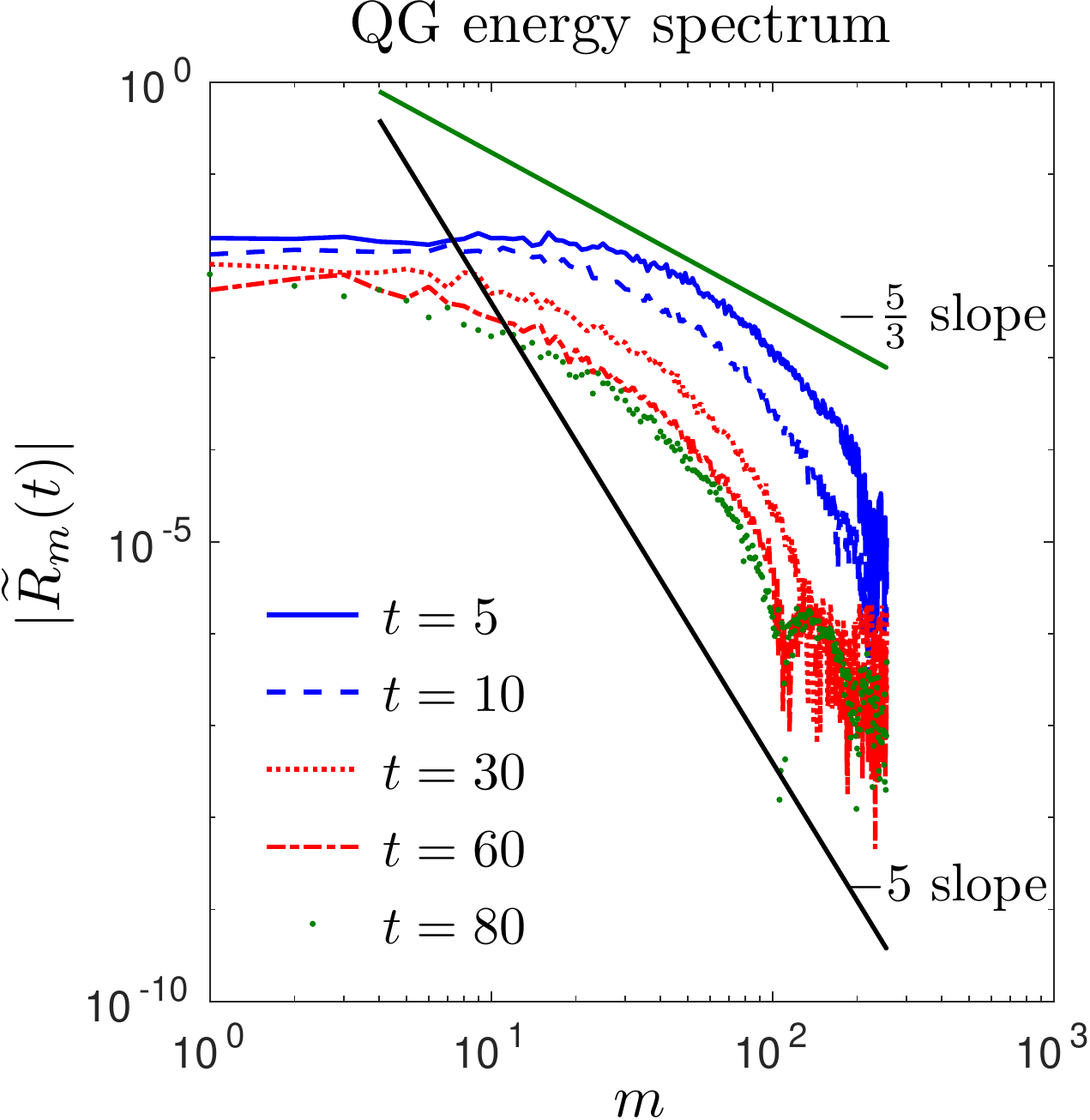}
}
  \caption{2D freely decaying turbulence: the energy spectrum $|\widetilde R_m(t)|$ vs $m$, $m=0,...,256$ in the inviscid case for (left) the inviscid SQG system, (middle) the viscous SQG system and (right) the QG system. The observed decay rate $-5/3$ for small wavenumbers and $-3$ for large wavenumbers for the SQG systems are in accordance with the theoretical predictions. Compare with the QG system which exhibits an energy decay rate of $-5$ instead.}
  \label{fig:2D:E}
\end{figure}

\bibliographystyle{abbrv}
\bibliography{references}
\end{document}